\documentclass{amsart}

\usepackage{amssymb, amsmath, amsfonts}
\usepackage{color}
\usepackage{graphicx}
\usepackage{hyperref}
\usepackage{mathrsfs}
\usepackage{pb-diagram}
\usepackage{epstopdf}
\usepackage{amsthm,enumerate,amscd,latexsym,curves}
\usepackage{bbm}
\usepackage[mathscr]{eucal}
\usepackage{epsfig,epsf,xypic,epic}
\usepackage{caption}
\usepackage{subcaption}

\newtheorem{thm}{Theorem}[section]
\newtheorem{prop}[thm]{Proposition}
\newtheorem{lemma}[thm]{Lemma}

\newtheorem{defn}[thm]{Definition}%[section]
\newtheorem{example}[thm]{Example}

\newtheorem{remark}[thm]{Remark}

%[section]

\numberwithin{equation}{section}

 \numberwithin{equation}{section}

%    Absolute value notation

%    Blank box placeholder for figures (to avoid requiring any
%    particular graphics capabilities for printing this document).

\begin{document}{\allowdisplaybreaks[4]

\title[{Pseudotoric structures and special Lagrangian fibrations}]{Pseudotoric structures and special Lagrangian torus fibrations on certain flag varieties}

\author[K. Chan]{Kwokwai Chan}
\address{Department of Mathematics, The Chinese University of Hong Kong, Shatin, Hong Kong}
\email{kwchan@math.cuhk.edu.hk}

\author[N. C. Leung]{Naichung Conan Leung}
\address{The Institute of Mathematical Sciences and Department of Mathematics, The Chinese University of Hong Kong, Shatin, Hong Kong}
\email{leung@math.cuhk.edu.hk}

\author[C. Li]{Changzheng Li}
\address{School of Mathematics, Sun Yat-sen University, Guangzhou 510275, P.R. China;}
\email{lichangzh@mail.sysu.edu.cn}

%\date{\today}
%\date{August 1, 2018}

%\dedicatory{This paper is dedicated to our advisors.}

%\keywords{Lie superalgebra, spin  representations}

%\subtitle{}

\begin{abstract}
We construct pseudotoric structures (\`{a} la Tyurin \cite{Tyu}) on the two-step flag variety $F\ell_{1, n-1; n}$, and explain a general relation between pseudotoric structures and special Lagrangian torus fibrations, the latter of which are important in the study of SYZ mirror symmetry \cite{SYZ}. As an application, we speculate how our constructions can explain the number of terms in the superpotential of Rietsch's Landau-Ginzburg mirror \cite{Riet, MaRi, PeRi, PRW}.
\end{abstract}

\maketitle

\tableofcontents

\section{Introduction}

A {\bf pseudotoric structure} on a K\"ahler manifold $X$ of complex dimension $n$ consists of
a Hamiltonian $T^k$-action on $(X, \omega_X)$, where $\omega_X$ is the K\"ahler structure on $X$ and $1 \leq k \leq n$, together with
a meromorphic map $F: X \dashrightarrow Y$ from $X$ to a smooth projective toric variety $Y$ of complex dimension $n-k$
satisfying certain compatibility conditions.
This notion, due to Tyurin \cite{Tyu}, is a generalization of toric varieties and was motivated by Auroux's pioneering work \cite{Aur1, Aur2} on SYZ mirror symmetry \cite{SYZ}.

In his original paper \cite{Tyu}, Tyurin explicitly constructed
pseudotoric structures on
a smooth quadric hypersurface $Q$ in $\mathbb{P}^n$.
In this note, we will do the same for
the two-step flag variety
$$F\ell_{1, n-1; n} = \{W_1\leqslant W_{n-1}\leqslant \mathbb{C}^n \mid \dim W_1 = 1, \dim W_{n-1} = n-1\},$$
which is a degree $(1,1)$ hypersurface in
$$\mathbb{P}\left(\wedge^1 \mathbb{C}^n\right) \times \mathbb{P}\left(\wedge^{n-1} \mathbb{C}^n\right) \cong \mathbb{P}^{n-1} \times \mathbb{P}^{n-1};$$
see Theorem \ref{thm:pseudotoric_two-step_flag}.

Pseudotoric structures are closely related to SYZ mirror symmetry because, as will be proved in this note,
when there is a meromorphic top form $\Omega_X$ on $X$ with no zeros and only simple poles along an anticanonical divisor $D \subset X$ satisfying the condition
\begin{equation}\label{eqn:condition_holom_vol_form_intro}
\iota_{V_1}\cdots \iota_{V_k} \Omega_X = d\log F^*f_1 \wedge \cdots \wedge d\log F^*f_{n-k},
\end{equation}
where $\{V_1,\dots,V_k\}$ is a basis of Hamiltonian vector fields generating the $T^k$-action and $f_1, \ldots, f_{n-k}$ are torus invariant rational functions on $Y$ whose norms are Poisson commuting with respect to the Poisson bracket induced by the toric symplectic sturcture on $Y$, then
$$\rho := (\mu_{T^k}; F^*|f_1|, \ldots, F^*|f_{n-k}|): X \setminus D \to B \subset \mathbb{R}^d$$
defines a special Lagrangian torus fibration on $X \setminus D$, and the SYZ construction \cite{SYZ} may be applied to it to construct a mirror.

We show that meromorphic top forms satisfying \eqref{eqn:condition_holom_vol_form_intro} can be found on both
\begin{itemize}
\item
smooth quadric hypersurfaces $Q$ in $\mathbb{P}^n$ (in Section \ref{sec:Omega_quadric}), and
\item
the two-step flag variety $F\ell_{1, n-1; n}$ (in Section \ref{sec:Omega_two-step_flag}),
\end{itemize}
with suitable choices of the anticanonical divisor $D$, giving rise to various special Lagrangian torus fibrations (see Theorems \ref{thm:slag_two-step_flag}, \ref{thm:slag_odd-dim_quadric} and \ref{thm:slag_odd-dim_quadric}).
The simplest examples in these two classes are
the Grassmannian $Gr(2,4)$ and
the full flag variety $F\ell_3$ respectively.

In \cite{Riet}, Rietsch proposed a Lie-theoretic construction of a Landau-Ginzburg (abbrev. as LG) mirror $W^{\text{Rie}}$ for any flag variety $X = G/P$ with the following properties:
\begin{itemize}
\item
The number of terms in $W^{\text{Rie}}$ is less than that of Givental's mirror $W^{\text{Giv}}$.
\item
The superpotential $W^{\text{Rie}}$ is defined on a space bigger than $(\mathbb{C}^*)^{\dim X}$ and has the correct number of critical points, while $W^{\text{Giv}}$ is defined on $(\mathbb{C}^*)^{\dim X}$ and may not have enough critical points (e.g. in the case of $Gr(2,4)$).
\end{itemize}
A natural question is:

\begin{center}
{\em How to understand Rietsch's mirror using SYZ?}\\
\end{center}

We first notice that for both the quadric $Q$ and the two-step flag variety $F\ell_{1, n-1; n}$, there are smooth families of integrable systems connecting
the special Lagrangian torus fibration $\rho: X \setminus D \to B$ constructed above with
the Gelfand-Cetlin fibration (restricted to the preimage of the interior of the Gelfand-Cetlin polytope).
We expect that: {\em $W^{\text{Rie}}$ is obtained from $W^{\text{Giv}}$ by ``merging'' some of the terms which geometrically corresponds to {\bf gluing of holomorphic disks} and a coordinate change coming from the relevant {\bf wall-crossing formula}}.

Evidence for this claim will be provided for both $Q$ and $F\ell_{1, n-1; n}$.
In the discussion in Section \ref{sec:speculations}, we speculate that, along the smooth family of integrable systems, facets of the Gelfand-Cetlin polytope are being ``pushed into'' the interior to become walls in the base $B$. Geometrically, this corresponds to deforming (or partially smoothing of) the anticanonical divisor $D \subset X$; and accordingly, terms in the superpotential $W^{\text{Giv}}$ are being merged together (as, conjecturally, holomorphic disks are being glued together) to give terms in the new superpotential.
For the eventual superpotential, the number of terms (which should be less than that of $W^{\text{Giv}}$) should coincide with that of $W^{\text{Rie}}$.
%The problem is that we still do not know how to obtain the coordinate changes as wall-crossing formulas.

As an example, let us consider $X = F\ell_{1, n-1; n}$.
There are $2n$ terms in Givental's superpotential $W^{\text{Giv}}$ corresponding to the $2n$ facets in the Gelfand-Cetlin polytope (see e.g. \cite{NNU00}),
4 of which have preimages which are {\em not} algebraic subvarieties in $X$.
When we move from the Gelfand-Cetlin system to our fibration along the smooth family of integrable systems, 2 of the facets are being ``pushed in'' to form a wall;
accordingly, 4 terms in $W^{\text{Giv}}$ are merged to give 2 new terms.
For the remaining $2n - 4$ facets, $2n - 6$ of them come in pairs and each pair is being ``pushed in'' to form a wall;
accordingly, $2(n-3)$ terms in $W^{\text{Giv}}$ are being merged in pairs to produce $n - 3$ new terms.
So the eventual LG superpotential should have
$$2 + (n - 3) + 2 = n + 1$$
terms, which is expected to be the number of terms in Rietsch's superpotential $W^{\text{Rie}}$. As we will see in Remark \ref{rmkRietmirror}, such expectation holds   for $n = 3, 4$.

Recently, the above speculations have been realized by the work of Hong, Kim and Lau \cite{HKL}, at least for the case of $Gr(2,n)$ (and also for $OG(1,5)$). The idea is that, by deforming the Gelfand-Cetlin fibration to the special Lagrangian torus fibration $\rho$,\footnote{In fact, Hong, Kim and Lau applied the technique in Abouzaid-Auroux-Katzarkov's work \cite{AAK}, instead of Tyurin's pseudotoric structures, to construct the Lagrangian torus fibrations, but their fibrations have essentially the same structures as the ones we constructed here.} the non-torus fibers of the Gelfand-Cetlin fibration, which are known to be non-trivial objects in the Fukaya category \cite{NoUe}, are deformed to certain {\it immersed} Lagrangians.

In the case of $Gr(2,n)$, it has already been shown by Nohara and Ueda \cite{NoUe2} that potential functions of the Lagrangian torus fibers of {\it different} Gelfand-Cetlin--type fibrations (which correspond to triangulations of the regular $n$-gon) can be glued (via cluster transformations) to give an open dense subset of Marsh-Rietsch's mirror \cite{MaRi}. But this is still insufficient as the open dense subset misses some of the critical points of Marsh-Rietsch's mirrors.

What Hong, Kim and Lau showed in \cite{HKL} was that deformations of the immersed Lagrangian above (a local model of which was given in \cite{CPU}) produces the final missing chart in Marsh-Rietsch's mirror of $Gr(2,n)$. This completes the SYZ construction, namely, by applying SYZ (or family Floer theory) to regular as well as singular fibers of the special Lagrangian torus fibration $\rho$, one can recover Marsh-Rietsch's mirror. Similar constructions should be applicable to the two-step flag varieties $F\ell_{1, n-1; n}$ \cite{HKL2}.

\subsection*{Acknowledgment}
We thank Yoosik Kim and Siu-Cheong Lau for detailed explanations of their work and very useful discussions. We are also indebted to the anonymous referee for insightful comments and suggestions which helped to improve the exposition of this article.

K. Chan was supported by Hong Kong RGC grant CUHK14300314 and direct grants from CUHK.
N. C. Leung was supported by Hong Kong RGC grants CUHK14302215 $\&$ CUHK14303516 and direct grants from CUHK.
C. Li was supported by NSFC grants 11822113, 11831017 and 11521101.

\section{Pseudotoric structures (after N. A. Tyurin)}

%**********Reformulate the definition of pseudotoric structures**********

Let $X$ be a K\"ahler manifold of complex dimension $n$ with K\"ahler structure $\omega_X$.

\begin{defn}[cf. Definition 1 in \cite{Tyu}]\label{defn:pseudotoric_structures}
A {\bf pseudotoric structure} on $X$ consists of the following data:
\begin{itemize}
\item[(1)]
a Hamiltonian $T^k$-action on $(X, \omega_X)$, where $1 \leq k \leq n$, and

\item[(2)]
a meromorphic map $F: X \dashrightarrow Y$ from $X$ to a toric K\"ahler manifold $Y$ of complex dimension $n-k$ equipped with a toric K\"ahler structure $\omega_Y$,
\end{itemize}
satisfying the following conditions:
\begin{itemize}
\item[(i)]
the base locus of $F$ is a $T^k$-invariant subvariety $B$ of complex dimension $k-1$ in $X$,

\item[(ii)]
the holomorphic map $F: X \setminus B \to Y$ is $T^k$-invariant, and

\item[(iii)]
for any smooth function $h$ on $Y$, we have the relation
\begin{equation}\label{eqn:condition_Ham_vec_field}
V_{F^*h} = f \cdot \nabla_F V_h,
\end{equation}
where $V_{F^*h}$ and $V_h$ are the Hamiltonian vector fields of the functions $F^*h$ and $h$ on $X \setminus B$ and $Y$ with respect to $\omega_X$ and $\omega_Y$ respectively, $f$ is a real function on $X \setminus B$, $\nabla_F$ is the symplectic connection induced by the symplectic fibration $F$, and $\nabla_F V_h$ denotes the horizontal lift of the vector field $X_h$ to $X \setminus B$.
\end{itemize}
The complex dimension of $Y$ is called the {\bf rank} of the pseudotoric structure.
\end{defn}

\begin{remark}
Our definition is slightly different from that of Tyurin \cite[Definition 1]{Tyu}, which only assumes that $X$ is a symplectic manifold.
\end{remark}

%**********(Re-)prove the existence of Lagrangian torus fibration**********

By fixing a basis $\{v_1,\dots,v_k\}$ of the Lie algebra $\text{Lie}(T^k)$ (which gives rise to a basis $\{V_1,\dots,V_k\}$ of Hamiltonian vector fields), we can write the moment map associated to the Hamiltonian $T^k$-action on $(X, \omega_X)$ as
$$\mu = (\mu_1, \ldots, \mu_k): X \to \mathbb{R}^k \cong \text{Lie}(T^k)^*,$$
i.e. $\iota_{V_i}\omega_X = d\mu_i$ for $i = 1,\dots, k$.

\begin{lemma}
For $1  \leq i \leq k$ and for any smooth function $h$ on $Y$, we have
$$\{ \mu_i, F^*h \} = \omega_X (V_i, V_{F^*h}) = 0,$$
where $\{\cdot, \cdot\}$ denotes the Poisson bracket on $X$ induced by $\omega_X$.
\end{lemma}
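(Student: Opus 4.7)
The plan is to reduce the statement to the fact that, in a symplectic fibration, the horizontal distribution is the $\omega_X$-orthogonal complement of the vertical distribution. The Hamiltonian vector fields $V_i$ generating the $T^k$-action are vertical with respect to $F$, while condition (iii) expresses $V_{F^*h}$ as a scalar multiple of a horizontal vector field; symplectic orthogonality then kills the pairing.

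More precisely, I would proceed as follows. First, recall that by the very definition of the Poisson bracket induced by $\omega_X$,
\[
\{\mu_i, F^*h\} \;=\; \omega_X(V_{\mu_i}, V_{F^*h}) \;=\; \omega_X(V_i, V_{F^*h}),
\]
where the last equality uses that the Hamiltonian vector field of $\mu_i$ is, by construction, the generator $V_i$. So it suffices to prove $\omega_X(V_i, V_{F^*h}) = 0$.

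Next, I would apply condition (iii) from Definition~\ref{defn:pseudotoric_structures}: on the complement of the base locus $B$ we have $V_{F^*h} = f\cdot \nabla_F V_h$ for some real function $f$, so
\[
\omega_X(V_i, V_{F^*h}) \;=\; f\cdot \omega_X\bigl(V_i,\, \nabla_F V_h\bigr).
\]
By condition (ii) the map $F: X\setminus B \to Y$ is $T^k$-invariant, which means $V_i$ is tangent to the fibers of $F$, i.e.\ $V_i$ is \emph{vertical} for the symplectic fibration $F$. On the other hand, $\nabla_F V_h$ is by definition \emph{horizontal}, and the symplectic connection $\nabla_F$ is characterized by the property that its horizontal subspace is the $\omega_X$-orthogonal complement of the vertical tangent space. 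Therefore $\omega_X(V_i, \nabla_F V_h) = 0$, which gives $\omega_X(V_i, V_{F^*h}) = 0$ on $X\setminus B$, and by continuity on all of $X$ (away from singular loci where the relevant objects are defined).

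There is no real obstacle here; the only subtlety worth spelling out is the justification that $V_i$ is vertical, which follows directly from $T^k$-invariance of $F$ (condition (ii)) since $F_*V_i$ is the generator of the induced $T^k$-action on $Y$, which is trivial by hypothesis. Once verticality of $V_i$ and horizontality of $\nabla_F V_h$ are in place, the vanishing is automatic from the definition of the symplectic connection.
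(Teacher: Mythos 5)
Your proof is correct, but it follows a different route from the paper's. The paper does not invoke condition (iii) at all: it simply observes that $V_i$ is vertical (as you do) and then computes directly from the definition of the Hamiltonian vector field that, for \emph{any} vertical vector field $W$,
$$\omega_X(V_{F^*h}, W) = d(F^*h)(W) = (F^*(dh))(W) = dh(dF(W)) = 0,$$
which is exactly the statement that $V_{F^*h}$ lies in $\mathrm{Ver}^{\perp\omega_X} = \mathrm{Hor}$; taking $W = V_i$ finishes the argument. You instead obtain the horizontality of $V_{F^*h}$ by citing condition (iii) (writing $V_{F^*h} = f\cdot\nabla_F V_h$) together with the defining property of the symplectic connection that $\mathrm{Hor}$ is the $\omega_X$-orthogonal complement of $\mathrm{Ver}$. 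Both are valid, and in fact your appeal to $\mathrm{Hor} = \mathrm{Ver}^{\perp\omega_X}$ conceals essentially the same one-line computation the paper performs explicitly. The paper's version is leaner in hypotheses --- it works for any $T^k$-invariant symplectic fibration whether or not condition \eqref{eqn:condition_Ham_vec_field} holds --- which matters structurally, since in this paper condition (iii) is reserved for the \emph{second} lemma (the Poisson commutativity of the $F^*\nu_j$ among themselves), whereas this first lemma is meant to be free of it. Your closing remark about extending ``by continuity to all of $X$'' is a little loose: $F^*h$ and hence the bracket only make sense on $X\setminus B$ in the first place, so nothing needs to be extended.
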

\begin{proof}
Since $F: X \setminus B \to Y$ is a $T^k$-invariant symplectic fibration, $V_i$ is tangent to the fibers of $F$ for each $i$. On the other hand, for any vector field $W$ lying in the vertical subbundle $\text{Ver} := \text{ker} (dF: T(X\setminus B) \to TY)$, we have
$$\omega_X(V_{F^*h}, W) = d(F^*h)(W) = (F^*(dh))(W) = dh(dF(W)) = 0.$$
The result follows.
\end{proof}

Let $\nu_1, \dots, \nu_{n-k}$ be functions on the image $U \subset Y$ of the holomorphic map $F: X \setminus B \to Y$ which are {\em Poisson commuting} with respect to the Poisson bracket induced by the toric symplectic structure $\omega_Y$ on $Y$. For example, such functions are given by the components of the toric moment map
$$\nu = (\nu_1, \ldots, \nu_{n-k}): Y \to \mathbb{R}^{n-k} \cong \text{Lie}(T^{n-k})^*$$
associated to the toric manifold $Y$.

\begin{lemma}
For $1 \leq j, \ell \leq n-k$, we have
$$\{ F^*\nu_j, F^*\nu_\ell \} = \omega_X (V_{F^*\nu_j}, V_{F^*\nu_\ell}) = 0.$$
\end{lemma}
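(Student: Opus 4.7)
The plan is to reduce the statement to the Poisson commutativity of $\nu_j$ and $\nu_\ell$ on $Y$ by invoking condition (iii) of the definition of a pseudotoric structure, which gives a precise relation between the Hamiltonian vector field of a pulled-back function and the horizontal lift of the Hamiltonian vector field downstairs.

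First I would work on $X \setminus B$ (where $F$ is a genuine holomorphic map), rewrite the Poisson bracket on $X$ in terms of a directional derivative, and move it along $F$:
\begin{equation*}
\{F^*\nu_j, F^*\nu_\ell\}_X \;=\; V_{F^*\nu_j}(F^*\nu_\ell) \;=\; d(F^*\nu_\ell)\bigl(V_{F^*\nu_j}\bigr) \;=\; d\nu_\ell\bigl(dF(V_{F^*\nu_j})\bigr).
\end{equation*}
Next I would apply condition (iii) to the smooth function $h = \nu_j$ on $Y$, obtaining a real function $f_j$ on $X \setminus B$ with $V_{F^*\nu_j} = f_j \cdot \nabla_F V_{\nu_j}$. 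By the defining property of the horizontal lift, $dF(\nabla_F V_{\nu_j}) = V_{\nu_j}$, hence $dF(V_{F^*\nu_j}) = f_j \, V_{\nu_j}$ and substituting back gives
\begin{equation*}
\{F^*\nu_j, F^*\nu_\ell\}_X \;=\; f_j \cdot d\nu_\ell(V_{\nu_j}) \;=\; f_j \cdot \{\nu_j, \nu_\ell\}_Y \;=\; 0,
\end{equation*}
where the last equality is the Poisson-commuting hypothesis on $Y$.

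Finally, since $B \subset X$ is a proper subvariety and $\omega_X(V_{F^*\nu_j}, V_{F^*\nu_\ell})$ is a smooth function on the complement of the indeterminacy locus of $F$, the vanishing extends wherever both sides make sense. I do not expect any serious obstacle: the only subtlety is being careful that the ``horizontal lift'' really does project to the original vector field under $dF$, which is exactly the defining property of the symplectic connection $\nabla_F$; everything else is a one-line manipulation. The previous lemma is not even needed here — this is strictly a statement about pulled-back Hamiltonians, using condition (iii) alone.
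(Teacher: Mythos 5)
Your proof is correct and follows essentially the same route as the paper: both apply condition \eqref{eqn:condition_Ham_vec_field} to replace one of the pulled-back Hamiltonian vector fields by a multiple of a horizontal lift, push forward along $dF$, and reduce to the Poisson commutativity of $\nu_j$ and $\nu_\ell$ on $Y$ (the paper applies the condition to $\nu_\ell$ where you apply it to $\nu_j$, which only affects an irrelevant sign by antisymmetry).
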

\begin{proof}
By the condition \eqref{eqn:condition_Ham_vec_field}, we have
$$\omega_X (V_{F^*\nu_j}, V_{F^*\nu_\ell}) = d(F^*\nu_j) (f \nabla_F V_{\nu_\ell}) = f F^* (d\nu_j (V_{\nu_\ell})) = f F^*\{\nu_j, \nu_\ell\} = 0.$$
\end{proof}

These lemmas prove the following

\begin{prop}\label{prop:Lagrangian_fibration}
The functions
$$\mu_1, \ldots, \mu_k, F^*\nu_1, \ldots, F^*\nu_{n-k}$$
form a set of Poisson commuting algebraically independent functions over $X \setminus B$. Therefore the map
$$\rho := (\mu_1, \ldots, \mu_k, F^*\nu_1, \ldots, F^*\nu_{n-k}): X \setminus B \to \mathbb{R}^n$$
defines a completely integrable system on $X \setminus B$.
\end{prop}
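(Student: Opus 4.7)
The plan is to combine the two preceding lemmas with one more easy Poisson-bracket computation, and then verify functional independence by a dimension count.

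First I would note that the two lemmas already handle four of the six types of brackets: they give $\{\mu_i, F^*\nu_j\} = 0$ and $\{F^*\nu_j, F^*\nu_\ell\} = 0$. The remaining brackets $\{\mu_i, \mu_{i'}\}$ vanish because the $T^k$-action is abelian: the vector fields $V_i, V_{i'}$ come from commuting elements of $\mathrm{Lie}(T^k)$, so $[V_i, V_{i'}] = 0$, and for Hamiltonian vector fields this is equivalent to $\{\mu_i, \mu_{i'}\}$ being locally constant — and globally zero on the connected $X \setminus B$ by normalizing the moment map. This establishes that the $n$ functions pairwise Poisson commute.

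Next I would establish algebraic (functional) independence, i.e. that $d\mu_1, \ldots, d\mu_k, d(F^*\nu_1), \ldots, d(F^*\nu_{n-k})$ are linearly independent on a dense open subset of $X \setminus B$. Since $k + (n-k) = n = \dim_{\mathbb{C}} X$, this is a question about a top-rank differential. I would split the argument into a "vertical" part and a "horizontal" part with respect to $F$. The differentials $d(F^*\nu_j) = F^*(d\nu_j)$ annihilate every vertical vector (tangent to fibers of $F$), whereas the $d\mu_i = \iota_{V_i}\omega_X$ are paired (via $\omega_X$) with the $V_i$, which are themselves \emph{tangent} to the fibers of $F$ by $T^k$-invariance (condition (ii)). It follows that the two sets of one-forms are linearly independent of each other wherever each set is internally independent.

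Thus the proof reduces to two independent genericity statements: (a) the $d\mu_i$ are independent wherever the $T^k$-action is locally free, which holds on a dense open subset of $X$ by effectiveness; and (b) the $d(F^*\nu_j)$ are pullbacks of $d\nu_j$, and the latter are independent on the dense open torus orbit of $Y$ because $\nu = (\nu_1,\dots,\nu_{n-k})$ is the moment map of an effective toric structure on $Y$; since $F$ is a submersion on a dense open subset of $X \setminus B$, pullback preserves this independence. The main (minor) obstacle is just being careful about the loci where the action or $F$ degenerates, but each exceptional set has positive codimension, so their union is still nowhere dense. Combining everything gives $n$ Poisson-commuting functionally independent functions on the $2n$-real-dimensional symplectic manifold $X \setminus B$, which by definition is a completely integrable system.
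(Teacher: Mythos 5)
Your proposal is correct and follows the same route as the paper, which simply cites the two preceding lemmas for the brackets $\{\mu_i, F^*h\}$ and $\{F^*\nu_j, F^*\nu_\ell\}$ and leaves the rest implicit. Your additional details --- the vanishing of $\{\mu_i,\mu_{i'}\}$ from the abelian $T^k$-action and the independence argument splitting $T(X\setminus B)$ into vertical and horizontal parts (using nondegeneracy of $\omega_X$ on the fibers of the symplectic fibration $F$) --- are exactly the routine completions the paper omits.
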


%We will see that in all the concrete examples in this paper, the map $\rho$ can be extended continuously (or even smoothly) to the whole of $X$, thus defining a Lagrangian torus fibration, or completely integrable system (in the sense of \cite[Definition 2.1]{HaKa}) on the whole of $X$.

%**********Give a sufficient condition for the existence of such a structure (perhaps in the next section)**********

Pseudotoric structures are difficult to find in general. But in \cite{Tyu}, Tyurin gave a (rather restrictive) sufficient condition, which we reformulate as follows.

For any smooth function $h$ on $Y$, the Hamiltonian vector fields $V_h$ and $V_{F^*h}$ are respectively defined by
$$\omega_Y(V_h, \cdot) = dh(\cdot) \text{ and } \omega_X(V_{F^*h}, \cdot) = (d(F^*h))(\cdot).$$
Applying the pullback $F^*$ to the former equation gives
$$\omega_Y(V_h, dF(\cdot)) = (d(F^*h))(\cdot)$$
as 1-forms on $X$.
On the other hand, by the definition of the horizontal lift, we have $dF(\nabla_F V_h) = V_h$ and hence
$$(F^*\omega_Y)(\nabla_F V_h, \cdot) = \omega_Y(V_h, dF(\cdot)).$$
So altogether we have
$$\omega_X(V_{F^*h}, \cdot) = (F^*\omega_Y)(\nabla_F V_h, \cdot).$$
Hence, a sufficient condition to guarantee that condition \eqref{eqn:condition_Ham_vec_field} holds is the following:
\begin{equation*}%\label{eqn:condition_Ham_vec_field_2}
\textit{The 1-forms $\omega_X(\nabla_F V_h, \cdot)$ and $(F^*\omega_Y)(\nabla_F V_h, \cdot)$ are parallel to each other over $X \setminus B$.}
\end{equation*}
Notice that both $\omega_X(\nabla_F V_h, \cdot)$ and $(F^*\omega_Y)(\nabla_F V_h, \cdot)$ annihilate the vertical subbundle $\text{Ver} \subset TX$, so they are determined by their values on the horizontal subbundle $\text{Hor}$.

\begin{prop}
If $Y$ is of complex dimension one, then condition \eqref{eqn:condition_Ham_vec_field} is automatically satisfied.
\end{prop}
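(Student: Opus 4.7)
The plan is to use the sufficient condition formulated just above the proposition: show that the two 1-forms $\omega_X(\nabla_F V_h, \cdot)$ and $(F^*\omega_Y)(\nabla_F V_h, \cdot)$ are everywhere parallel on $X\setminus B$. Since both annihilate the vertical subbundle $\text{Ver} = \ker(dF)$, it suffices to compare them on the horizontal subbundle $\text{Hor}$. The key observation is purely dimensional: when $\dim_{\mathbb{C}} Y = 1$, the subbundle $\text{Hor}$ has real rank $2$, and on any real $2$-dimensional vector space any two nondegenerate skew-symmetric forms differ by a nonzero scalar.

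Concretely, I would first check that both $\omega_X|_{\text{Hor}}$ and $(F^*\omega_Y)|_{\text{Hor}}$ are nondegenerate. For $\omega_X|_{\text{Hor}}$ this is built into the notion of a symplectic connection: $\text{Hor}$ is by definition the $\omega_X$-orthogonal complement to the symplectic subbundle $\text{Ver}$, hence is itself symplectic. For $(F^*\omega_Y)|_{\text{Hor}}$, note that $dF$ restricts to a fiberwise isomorphism $\text{Hor} \xrightarrow{\sim} TY$ (since $\text{Ver}$ is precisely the kernel of $dF$), so the pullback of the symplectic form $\omega_Y$ is nondegenerate on $\text{Hor}$.

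Given that $\text{Hor}$ has rank $2$, both restrictions are volume forms on the same rank-$2$ bundle, so there is a smooth nowhere-vanishing real-valued function $f$ on $X \setminus B$ with $(F^*\omega_Y)|_{\text{Hor}} = f \cdot \omega_X|_{\text{Hor}}$. Contracting both sides with $\nabla_F V_h \in \text{Hor}$ and using that both 1-forms already annihilate $\text{Ver}$, I get the equality of 1-forms
\[
(F^*\omega_Y)(\nabla_F V_h, \cdot) = f \cdot \omega_X(\nabla_F V_h, \cdot)
\]
on all of $T(X\setminus B)$. Combining this with the identity $\omega_X(V_{F^*h},\cdot) = (F^*\omega_Y)(\nabla_F V_h, \cdot)$ derived in the paragraph preceding the proposition and invoking nondegeneracy of $\omega_X$ yields $V_{F^*h} = f \cdot \nabla_F V_h$, which is precisely condition \eqref{eqn:condition_Ham_vec_field}.

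There is essentially no obstacle here; the only point requiring mild care is verifying that $\omega_X|_{\text{Hor}}$ is nondegenerate, which is automatic once one notes that $F: X \setminus B \to Y$ is assumed to be a symplectic fibration in the setup leading to the sufficient condition, so that $\nabla_F$ is defined via symplectic orthogonal complements. The proposition is thus really a statement about the trivial fact that a line-bundle-valued quantity lives in a $1$-dimensional space.
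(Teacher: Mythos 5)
Your proposal is correct and follows essentially the same route as the paper's own proof: both arguments reduce to the observation that $\text{Hor}$ has rank $2$, so $\bigwedge^2 \text{Hor}^*$ is a line bundle and the restrictions of $\omega_X$ and $F^*\omega_Y$ to $\text{Hor}$ differ by a real function. Your additional verification that both restrictions are nondegenerate (so the function is nowhere vanishing) is a useful detail the paper leaves implicit, but it does not change the argument.
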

\begin{proof}
When $\dim_{\mathbb{C}} Y = 1$, the horizontal subbundle $\text{Hor}$ is of rank 2, so $\bigwedge^2 \text{Hor}^*$ is of rank 1. Hence $\omega_X(\cdot, \cdot)$ and $(F^*\omega_Y)(\cdot, \cdot)$ only differ by a real function over $\text{Hor}$.
\end{proof}
This is why the condition \eqref{eqn:condition_Ham_vec_field} did not appear in the cohomogeneity one examples in \cite{Aur1, Aur2, CLL}.

\begin{example}\label{eg:key_example}
Suppose that both $X$ and $Y$ are submanifolds of a complex projective space $\mathbb{P}^N$ and their symplectic structures $\omega_X$ and $\omega_Y$ are restrictions of the Fubini-Study form $\omega_{\text{FS}}$ on $\mathbb{P}^N$. Also suppose that the map $F: X \to Y$ is the restriction of a projection map $\tilde{F}: \mathbb{P}^N \to \mathbb{P}^N$ onto a linear subspace $L \subset \mathbb{P}^N$. Then, by a linear change of coordinates if necessary, we may assume that $L$ is given by the common zero set of a subset of the coordinates. In this case, a straightforward computation shows that $\omega_X(\nabla_F V_h, \cdot)$ is parallel to $(F^*\omega_Y)(\nabla_F V_h, \cdot)$ over the horizontal subbundle $\text{Hor}$ (i.e., differ by a constant at every point $x \in X$), and so condition \eqref{eqn:condition_Ham_vec_field} holds.
\end{example}

\begin{lemma}\label{lem:restriction}
Condition \eqref{eqn:condition_Ham_vec_field} remains valid after restriction to a sub-fibration. More precisely, if $\tilde{F}: M \to N$ is a symplectic fibration for which condition \eqref{eqn:condition_Ham_vec_field} holds, and if $F: X \to Y$ is a symplectic fibration such that $X$ and $Y$ are symplectic submanifolds of $M$ and $N$ respectively, and $F$ is the restriction of $\tilde{F}$ to $X$, i.e., the following diagram commutes
\begin{equation*}
\xymatrix{
X \ar@{->>}[d]_{F} \ar@{^{(}->}[r]^{\iota_X} & M \ar@{->>}[d]^{\tilde{F}}\\
%&   & \\
Y \ar@{^{(}->}[r]^{\iota_Y} & N}
\end{equation*}
Then condition \eqref{eqn:condition_Ham_vec_field} also holds for $F: X \to Y$ for any function on $Y$ coming from the restriction of a function of $N$.
\end{lemma}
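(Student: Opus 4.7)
The plan is to descend the ambient identity $V_{\tilde{F}^{*}\tilde{h}}^{M} = \tilde{f} \cdot \nabla_{\tilde{F}} V_{\tilde{h}}^{N}$ from $M$ to $X$ by applying the fibrewise symplectic-orthogonal projection $\pi_X: TM|_X \to TX$ with kernel $TX^{\omega_M}$, and to verify that the projected identity is exactly condition~\eqref{eqn:condition_Ham_vec_field} for $F: X \to Y$. Throughout, write $h = \iota_Y^{*}\tilde{h}$ for the given extension $\tilde{h}$ to $N$ and set $\tilde{H} := \tilde{F}^{*}\tilde{h}$, so that $F^{*}h = \iota_X^{*}\tilde{H}$.

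The first step is to note that the Hamiltonian vector field of a restricted function is the symplectic-orthogonal projection of the ambient Hamiltonian vector field, namely $V_{F^{*}h}^{X} = \pi_X(V_{\tilde{H}}^{M}|_X)$. Indeed, for any $W \in TX$,
\[
\omega_X(\pi_X(V_{\tilde{H}}^{M}), W) = \omega_M(V_{\tilde{H}}^{M}, W) = d\tilde{H}(W) = d(F^{*}h)(W),
\]
since the $TX^{\omega_M}$-component of $V_{\tilde{H}}^{M}$ pairs trivially with $W$ under $\omega_M$. The same computation on $N$ gives $V_{h}^{Y} = \pi_Y(V_{\tilde{h}}^{N}|_Y)$.

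Next I would check that $\pi_X$ is compatible with taking horizontal lifts. Setting $\tilde{W} := \nabla_{\tilde{F}} V_{\tilde{h}}^{N} \in \text{Hor}^{M}$, I would show (i) that $\pi_X(\tilde{W}|_X)$ lies in the intrinsic horizontal distribution $\text{Hor}^{X} = (\ker dF|_{TX})^{\omega_X}$, and (ii) that its image under $dF$ is a pointwise multiple of $V_{h}^{Y}$. Claim~(i) is a short diagram chase: for $v \in \ker dF|_{TX} \subseteq \ker d\tilde{F}$, one has $\omega_X(\pi_X(\tilde{W}), v) = \omega_M(\tilde{W}, v) - \omega_M(\tilde{W} - \pi_X(\tilde{W}), v) = 0$, the first term vanishing because $\tilde{W} \in \text{Hor}^{M}$ and $v \in \text{Ver}^{M}$, and the second because $\tilde{W} - \pi_X(\tilde{W}) \in TX^{\omega_M}$ and $v \in TX$. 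Claim~(ii) reduces to showing that $d\tilde{F}$ sends $TX^{\omega_M}$ into $TY^{\omega_N}$ on the relevant vectors, so that the $TY^{\omega_N}$-parts of $V_{\tilde{h}}^{N}$ and of $d\tilde{F}(\tilde{W} - \pi_X(\tilde{W}))$ balance after applying $\pi_Y$.

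With these two facts in hand, applying $\pi_X$ to the restriction $V_{\tilde{H}}^{M}|_X = \tilde{f}|_X \cdot \tilde{W}|_X$ produces $V_{F^{*}h}^{X} = f \cdot \nabla_{F} V_{h}^{Y}$ on $X \setminus B$, where $f$ absorbs $\tilde{f}|_X$ together with the proportionality factor from the preceding step; this is exactly condition~\eqref{eqn:condition_Ham_vec_field} for $F$. The main obstacle will be Claim~(ii)---the compatibility of the symplectic-orthogonal decompositions $TM|_X = TX \oplus TX^{\omega_M}$ and $TN|_Y = TY \oplus TY^{\omega_N}$ under $d\tilde{F}$---which encodes the geometric substance of $F: X \to Y$ being a ``sub-fibration'' of $\tilde{F}: M \to N$ in the sense meant by the lemma.
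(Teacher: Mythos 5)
Your strategy coincides with the paper's own proof: it rests on exactly your two ingredients, namely (1) the Hamiltonian vector field of a restricted function is the symplectic projection to $TX$ of the ambient Hamiltonian vector field, and (2) compatibility of the horizontal data of $F$ and $\tilde{F}$ under that projection. Your Step 1 and your Claim (i) are correct, and your use of $\ker dF|_{TX} \subseteq \ker d\tilde{F}$ is precisely the paper's observation that $\iota_X(F^{-1}(y)) = \iota_X(X) \cap \tilde{F}^{-1}(\iota_Y(y))$, so that $\text{Ver}^X = \text{Ver}^M \cap TX$.

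The place where you stop short is Claim (ii), and you are right that this is where the geometric substance sits --- but your proposed reduction is not how the paper closes it, and as stated it is not yet a proof. The paper instead asserts the structural statement that the vertical--horizontal splitting of $TX$ is induced by intersection, $TX = (\text{Ver}^M \cap TX) \oplus (\text{Hor}^M \cap TX)$, and that $\nabla_F V_{\iota_Y^*h}$ is obtained by projecting $\nabla_{\tilde{F}} V_{\tilde{h}}$ onto $\text{Hor}^M \cap TX$; granting this, $\pi_X(\tilde{W})$ is that projection and $dF(\pi_X(\tilde{W}))$ equals $V_h^Y$ (so your proportionality factor is $1$ and $f = \tilde{f}|_X$). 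Note that the inclusion $\text{Hor}^M \cap TX \subseteq \text{Hor}^X$ is the easy half --- it is your Claim (i) in disguise --- while the equality, equivalently the surjectivity of $dF$ restricted to $\text{Hor}^M \cap TX$ onto $TY$, is what makes Claim (ii) true; it genuinely uses how $X$ sits inside the fibration (a dimension count shows it can fail for an arbitrary symplectic $X \subset M$, though it holds in the Segre and Veronese situations where the lemma is applied). The paper asserts this compatibility rather than proving it, so your diagnosis of the crux is accurate; to complete the argument along the paper's lines you should replace the condition $d\tilde{F}(TX^{\omega_M}) \subseteq TY^{\omega_N}$ by the statement that $\text{Hor}^X = \text{Hor}^M \cap TX$ and verify it (or record it as a hypothesis) in the cases at hand.
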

\begin{proof}
The result follows from the following two observations:
\begin{itemize}
\item[(1)]
For a symplectic submanifold $\iota_X: X \hookrightarrow M$ and a smooth function $f:M \to \mathbb{R}$, the Hamiltonian vector field $V_{\iota_X^*f}$ of the restriction $\iota_X^*f$ is given by restricting the Hamiltonian vector field $V_f$ to $X$ and projecting to the subbundle $TX \subset TM$.

\item[(2)]
On the other hand, since $\iota_X(F^{-1}(y)) = \iota_X(X) \cap (\tilde{F})^{-1}(\iota_Y(y))$, if the decomposition of $TM$ into the direct sum of the vertical and horizontal subbundles with respect to the fibration $\tilde{F}:M \to N$ is given by
$$TM = \text{Ver} \oplus \text{Hor},$$
then the corresponding decomposition of $TX$ with respect to $F:X \to Y$ is precisely
$$TX = (\text{Ver} \cap TX) \oplus (\text{Hor} \cap TX).$$
Also, $\tilde{F} \circ \iota_X = \iota_Y \circ F$, so the horizontal lift $\nabla_F V_{\iota_Y^*h}$ is given by restricting the horizontal lift $\nabla_{\tilde{F}} V_h$ to $X$ and projecting to the subbundle $\text{Hor} \cap TX \subset \text{Hor}$.
\end{itemize}
\end{proof}

\subsection{The two-step flag variety $F\ell_{1,n-1; n}$}

The two-step flag variety
$$F\ell_{1,n-1; n} = \{W_1\leqslant W_{n-1}\leqslant \mathbb{C}^n \mid \dim W_1 = 1, \dim W_{n-1} = n-1\}$$
is a homogeneous variety of the form $SL(n,\mathbb{C})/P$.
Denote by $[x_1: \cdots: x_n]$ the homogenous coordinates of $\mathbb{P}(\bigwedge^1 \mathbb{C}^n)$, and by $[x_{\hat 1}: \cdots: x_{\hat n}]$ the homogeneous coordinates of $\mathbb{P}(\bigwedge^{n-1} \mathbb{C}^n)$; here we are using the notational convention
$$x_{\hat j}:= x_{1 2 \cdots (j-1)(j+1) \cdots n}.$$

Then the well-known {\em Pl\"ucker embedding} of $X=F\ell_{1, n-1; n}$ in $\mathbb{P}(\bigwedge^1 \mathbb{C}^n) \times \mathbb{P}(\bigwedge^{n-1} \mathbb{C}^n) \cong \mathbb{P}^{n-1} \times \mathbb{P}^{n-1}$ is given by
$$x_{2}x_{\hat 2} = x_1x_{\hat 1} + \sum_{j=3}^n(-1)^{j-1}x_jx_{\hat j}.$$
In particular, it is of complex dimension $2n-3$. In terms of the inhomogeneous coordinates
$$z_j = {x_j \over x_1}, z_{\hat j} = {x_{\hat j} \over x_{\hat n}}$$
in the open subset $x_1\neq 0, x_{\hat n}\neq 0$, it is given by
$$z_{2}z_{\hat 2}= z_{\hat 1}+(-1)^{n-1}z_n+\sum_{j=3}^{n-1}(-1)^{j-1}z_jz_{\hat j}.$$

The maximal complex torus
$$T_{\mathbb{C}}=\{\mathbf{t} = (t_1, \ldots, t_n)\in (\mathbb{C}^*)^n \mid t_1 t_2 \cdots t_n = 1\},$$
as a subgroup of $SL(n, \mathbb{C})$ and of dimension $n-1$, and hence induces an action on
$\mathbb{P}(\bigwedge^1 \mathbb{C}^n) \times \mathbb{P}(\bigwedge^{n-1} \mathbb{C}^n)$ given by
  \begin{equation}\label{actiontwostep}
     \mathbf{t}\cdot ([x_1:\cdots: x_n],[x_{\hat 1}:\cdots:x_{\hat n}])=([t_1x_1:\cdots: t_nx_n],[t_{\hat 1}x_{\hat 1}:\cdots:t_{\hat n}x_{\hat n}])
  \end{equation}  where we denote $t_{\hat j}:= t_{1} t_2 \cdots t_{j-1}t_{j+1} \cdots t_n.$ 
Clearly, the $T_\mathbb{C}$-action preserves $X$. Moreover,  the induced action of the maximal compact torus $T\subset T_{\mathbb{C}}$ on $X$,
 where $\mathbf{t}=(\exp^{\sqrt{-1} \theta_1},\cdots, \exp^{\sqrt{-1}\theta_n})\in T$,   is Hamiltonian with moment map
$$\mu = (\mu_1,\ldots, \mu_{n-1}): X\longrightarrow \mathbb{R}^{n-1}.$$

We define a rational map
$$F: F\ell_{1,n-1; n} \dashrightarrow \mathbb{P}^{n-2}$$
by setting
$$y_0 = x_1x_{\hat 1}, y_1 = x_3x_{\hat 3}, y_2 = x_4x_{\hat 4}, \ldots, y_{n-2} = x_nx_{\hat n}.$$
The base locus of $F$ is given by the Schubert subvariety $B$ defined by $x_1x_{\hat 1} = x_3x_{\hat 3} = \cdots = x_nx_{\hat n} = 0$, which is of complex dimension $n-2$ in $F\ell_{1,n-1; n}$.

\begin{thm}\label{thm:pseudotoric_two-step_flag}
This defines a pseudotoric structure on the two-step flag variety $F\ell_{1,n-1; n}$.
\end{thm}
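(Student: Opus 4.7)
The plan is to verify the three conditions (i), (ii), (iii) of Definition \ref{defn:pseudotoric_structures}, taking $k = n-1$ to be the complex dimension of the compact maximal torus $T$ acting via \eqref{actiontwostep}, and $Y = \mathbb{P}^{n-2}$ equipped with its standard Fubini--Study toric K\"ahler structure. The dimensions line up: $k + \dim_\mathbb{C} Y = (n-1) + (n-2) = 2n-3 = \dim_\mathbb{C} X$. Each defining function $x_j x_{\hat j}$ is $T$-invariant since $t_j t_{\hat j} = t_1 t_2 \cdots t_n = 1$; this immediately yields the $T$-invariance of $B$ in (i) and the $T$-invariance of $F$ on $X \setminus B$ in (ii). The claim $\dim_\mathbb{C} B = n-2 = k-1$ is a standard Schubert-type count: $B$ is cut out on $X$ by $n-1$ divisors of type $\{x_j x_{\hat j} = 0\}$, which can be checked to intersect transversely inside $X$ to give the expected codimension.

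The substantive content is condition (iii), for which I plan to apply Example \ref{eg:key_example} after passing to the Segre embedding $\sigma: \mathbb{P}^{n-1} \times \mathbb{P}^{n-1} \hookrightarrow \mathbb{P}^{n^2-1}$, $([x_i], [x_{\hat j}]) \mapsto [z_{ij} = x_i x_{\hat j}]$. Since $\sigma^*\omega_\text{FS}$ equals the product Fubini--Study form on $\mathbb{P}^{n-1} \times \mathbb{P}^{n-1}$, choosing this product form as $\omega_X$ realizes $X = F\ell_{1,n-1;n}$ as a K\"ahler submanifold of $\mathbb{P}^{n^2-1}$. The target $Y = \mathbb{P}^{n-2}$ embeds linearly into $\mathbb{P}^{n^2-1}$ as the coordinate subspace spanned by $\{z_{11}, z_{33}, z_{44}, \ldots, z_{nn}\}$, and its toric Fubini--Study K\"ahler form is then simply the restricted one. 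Under these embeddings, $F$ is precisely the restriction to $X$ of the linear coordinate projection $\tilde F: \mathbb{P}^{n^2-1} \dashrightarrow Y$ that forgets all coordinates except these $n-1$ diagonal ones. Example \ref{eg:key_example} then delivers condition \eqref{eqn:condition_Ham_vec_field} for $F$; equivalently, Lemma \ref{lem:restriction} transfers the compatibility from $\tilde F$ down to $F$ along the chain $X \subset \mathbb{P}^{n-1} \times \mathbb{P}^{n-1} \subset \mathbb{P}^{n^2-1}$.

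The main obstacle I expect is not an analytic one but a compatibility check: one must verify that $F$ really does factor through $Y$ as claimed, i.e., that after composing with $\sigma$ the image of $X$ lies in the stated linear subspace modulo the center of projection. This is precisely where the Pl\"ucker relation $x_2 x_{\hat 2} = x_1 x_{\hat 1} + \sum_{j=3}^n (-1)^{j-1} x_j x_{\hat j}$ enters: it expresses $z_{22}$ as a linear combination of $z_{11}, z_{33}, \ldots, z_{nn}$, which is what justifies dropping $z_{22}$ when defining $F$ and explains why exactly these $n-1$ diagonal coordinates are the correct choice. Once this identification is pinned down, the hypotheses of Example \ref{eg:key_example} are satisfied, and the pseudotoric structure follows.
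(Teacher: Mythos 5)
Your proposal is correct and follows essentially the same route as the paper's proof: embed $\mathbb{P}^{n-1}\times\mathbb{P}^{n-1}$ (and hence $X$) into $\mathbb{P}^{n^2-1}$ via the Segre map, realize $\mathbb{P}^{n-2}$ as a coordinate linear subspace so that $F$ becomes the restriction of a linear projection, and invoke Example \ref{eg:key_example} together with Lemma \ref{lem:restriction}. Your additional checks of conditions (i)--(ii) via the $T$-invariance of the products $x_j x_{\hat j}$ and the dimension count for $B$ are correct and merely make explicit what the paper leaves implicit.
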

\begin{proof}
To see that this produces a pesudotoric structure, we embed $\mathbb{P}^{n-1} \times \mathbb{P}^{n-1}$ and hence $F\ell_{1,n-1; n}$ into $\mathbb{P}^N$, where $N = n^2 - 1$, by the Segre embedding. We also embed $\mathbb{P}^{n-2}$ into the same $\mathbb{P}^N$ in a natural way so that $F$ is the restriction of a projection map $\tilde{F}: \mathbb{P}^N \to \mathbb{P}^N$ onto a linear subspace (image of $\mathbb{P}^{n-2}$). Then the result follows from Example \ref{eg:key_example} and Lemma \ref{lem:restriction}.
\end{proof}

\subsection{The quadrics}
This example was treated in Tyurin \cite[Theorem 2]{Tyu}, where he showed that an arbitrary smooth quadric hypersurface in $\mathbb{P}^n$ admits a pseudotoric structure.

For $n = 2m$, consider the quadric
$$Q = \{ x_0^2 + x_1x_2 + \cdots + x_{2m-1}x_{2m} = 0 \}.$$
This quadric is a homogeneous variety $G/P$ for $G=SO(2m+1, \mathbb{C})$. A maximal torus $T^{m}_{\mathbb{C}}$ of $SO(2m+1, \mathbb{C})$ acts on the quadric by
\begin{equation}\label{actionoddqua}
   (t_1, \ldots, t_{m})\cdot [x_0:x_1:\cdots: x_{2m}] = [x_0, t_1x_1, t_1^{-1}x_2, \ldots, t_mx_{2m-1}, t_{m}^{-1}x_{2m}].
\end{equation}
%Then a half-dimensional subtorus $T^m \subset T^{2m}$ of the standard torus action on $\mathbb{P}^{2m}$ preserves $Q$.
Define the rational map
$$F: \mathbb{P}^{2m} \dashrightarrow \mathbb{P}^m$$
by setting
$$y_0 = x_0^2, y_1 = x_1x_2, \ldots, y_m = x_{2m-1}x_{2m}.$$
This restricts to $Q$ to give a $T^m$-invariant rational map
$$F: Q \dashrightarrow \mathbb{P}^{m-1},$$
where $T^m$ denotes the standard maximal compact subtorus of $T^{m}_{\mathbb{C}}$, and $\mathbb{P}^{m-1} \subset \mathbb{P}^m$ is defined by
$$y_0 + y_1 + \cdots + y_m = 0.$$
The base locus of $F$ is given by the union of a set of linear subspaces of dimension $m$ in $\mathbb{P}^{2m}$, so the base locus $B$ is of complex dimension $m-1$ in $Q$.

Similarly, when $n = 2m + 1$, consider the quadric
$$Q = \{ x_0x_1 + x_2x_3 + \cdots + x_{2m}x_{2m+1} = 0 \}.$$
This quadric is a homogeneous variety $G/P$ for $G=SO(2m+2, \mathbb{C})$. A maximal complex torus $T^{m+1}_{\mathbb{C}}$ of $SO(2m+2, \mathbb{C})$ acts on the quadric by
\begin{equation}\label{actionevenqua}
   (t_0, \ldots, t_{m})\cdot  [x_0:x_1:\cdots: x_{2m+1}] = [t_0x_0, t_0^{-1}x_1, t_1x_2, t_1^{-1}x_3, \ldots, t_mx_{2m}, t_{m}^{-1}x_{2m+1}].
\end{equation}
%Then an $(m+1)$-dimensional subtorus $T^{m+1} \subset T^{2m+1}$ of the standard torus action on $\mathbb{P}^{2m+1}$ preserves $Q$.
Define the rational map
$$F: \mathbb{P}^{2m+1} \dashrightarrow \mathbb{P}^m$$
by setting
$$y_0 = x_0x_1, y_1 = x_2x_3, \ldots, y_m = x_{2m}x_{2m+1}.$$
This restricts to $Q$ to give a $T^{m+1}$-invariant rational map
$$F: Q \dashrightarrow \mathbb{P}^{m-1},$$
where  $T^{m+1}$ denotes the standard maximal compact subtorus of $T^{m+1}_{\mathbb{C}}$, $\mathbb{P}^{m-1} \subset \mathbb{P}^m$ is defined by
$$y_0 + y_1 + \cdots + y_m = 0.$$
The base locus of $F$ is given by the union of a set of linear subspaces of dimension $m$ in $\mathbb{P}^{2m+1}$, so the base locus $B$ is of complex dimension $m-1$ in $Q$.

\begin{thm}[Theorem 2 in \cite{Tyu}]\label{thm:pseudotoric_quadric}
This defines a pseudotoric structure on the smooth quadric.
\end{thm}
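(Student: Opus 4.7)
The strategy is to verify the conditions of Definition \ref{defn:pseudotoric_structures} for the data $(T^k, F)$ specified above, following the template used in the proof of Theorem \ref{thm:pseudotoric_two-step_flag}. Conditions (1) and (2) are immediate from the construction (the Hamiltonian nature of the maximal torus action is inherited from the standard one on $\mathbb{P}^n$ by restriction to $Q$). The $T^k$-invariance of $F$ (condition (ii)) and of the base locus $B$ (part of (i)) follow directly from the explicit formulas: each monomial $y_i$ is a product of two coordinates scaled by reciprocal characters under the torus actions \eqref{actionoddqua} or \eqref{actionevenqua} (or $y_0 = x_0^2$ in the $n = 2m$ case, where $x_0$ itself is fixed), so each $y_i$ is invariant. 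The claimed dimension of $B$ is then read off from the fact that the base locus in the ambient projective space is a union of coordinate linear subspaces, whose intersection with $Q$ is cut down appropriately by the quadric equation.

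The crux is condition (iii), equation \eqref{eqn:condition_Ham_vec_field}. A direct appeal to Example \ref{eg:key_example} fails because $F$ is defined by degree-two monomials rather than by a linear projection, and this is where I expect the main obstacle to lie. The natural fix is to replace the Segre embedding used in the two-step flag case by the degree-two Veronese embedding $v_2: \mathbb{P}^n \hookrightarrow \mathbb{P}^N$ with $N = \binom{n+2}{2} - 1$. Under $v_2$, the monomials $y_0, y_1, \ldots, y_m$ are pullbacks of a distinguished set of linear coordinates on $\mathbb{P}^N$, so $F$ factors as $F = \tilde{F} \circ v_2$, where $\tilde{F}: \mathbb{P}^N \dashrightarrow \mathbb{P}^m$ is the linear projection onto the coordinate subspace spanned by those $y_i$. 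Example \ref{eg:key_example} then applies to $\tilde{F}$, and Lemma \ref{lem:restriction} applied twice --- once to restrict the source from $v_2(\mathbb{P}^n)$ to $v_2(Q)$, and once to restrict the target from $\mathbb{P}^m$ to the hyperplane $\{y_0 + \cdots + y_m = 0\} \cong \mathbb{P}^{m-1}$ forced by the quadric equation --- yields condition \eqref{eqn:condition_Ham_vec_field} for $F: Q \dashrightarrow \mathbb{P}^{m-1}$.

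The remaining subtlety is bookkeeping the symplectic structures. The Fubini--Study form one lands on via $v_2 \circ \iota_Q$ differs from the original one on $Q$ by an overall factor of two, since $v_2^* \omega_{\mathrm{FS}}^{(N)} = 2\, \omega_{\mathrm{FS}}^{(n)}$, and the inclusion $\mathbb{P}^m \hookrightarrow \mathbb{P}^N$ as a linear coordinate subspace is an isometry only after appropriate normalization. Neither issue is fatal: condition \eqref{eqn:condition_Ham_vec_field} is preserved under constant positive rescalings of $\omega_X$ or $\omega_Y$, since such rescalings leave the horizontal subbundle (the symplectic orthogonal of the vertical subbundle) unchanged and only rescale the Hamiltonian vector fields and the proportionality function $f$ by compensating scalars. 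The argument is uniform in the parity of $n$: the even ($n = 2m$) and odd ($n = 2m + 1$) cases differ only in the precise shape of the torus action and of the $y_i$, and both fit the Veronese reduction in exactly the same way.
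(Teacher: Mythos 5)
Your proposal is correct and takes essentially the same route as the paper: the paper's own proof likewise passes through the degree-two Veronese embedding of the ambient projective space into $\mathbb{P}^N$ (with $N = \binom{2m+2}{2}-1$ in the even-ambient case), realizes $F$ as the restriction of a linear projection $\tilde{F}:\mathbb{P}^N\to\mathbb{P}^N$, and concludes by Example \ref{eg:key_example} and Lemma \ref{lem:restriction}. Your extra observations --- the explicit verification of torus-invariance of the $y_i$ and the remark that the rescaling $v_2^*\omega_{\mathrm{FS}} = 2\,\omega_{\mathrm{FS}}$ is harmless because condition \eqref{eqn:condition_Ham_vec_field} is invariant under constant rescalings of the symplectic forms --- are correct elaborations of details the paper leaves implicit.
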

\begin{proof}
The proof is very similar to the previous example, but this time we embed $\mathbb{P}^{2m}$ into $\mathbb{P}^N$, where $N = {2m+2 \choose 2} - 1$, by the Veronese embedding, and also embed $\mathbb{P}^m$ into the same $\mathbb{P}^N$ in a natural way. Then we see that $F$ comes from the restriction of a projection map $\tilde{F}: \mathbb{P}^N \to \mathbb{P}^N$ onto a linear subspace (image of $\mathbb{P}^m$). So the result again follows from Example \ref{eg:key_example} and Lemma \ref{lem:restriction}.
\end{proof}

\section{Special Lagrangian torus fibrations}\label{sec:sLag_fibration}

In this section, we explain how pseudotoric structures are related to special Lagrangian fibrations, and hence SYZ mirror symmetry \cite{SYZ}. Recall that a submanifold $L \subset X$ is called {\em special Lagrangian} if it is Lagrangian and $\text{Im } e^{\sqrt{-1}\theta}\Omega_X|_L = 0$ for some $\theta \in \mathbb{R}$. We have the following proposition:

\begin{prop}\label{prop:sLag_fibration}
Let $(X, \omega_X)$ be a K\"ahler manifold equipped with a pseudotoric structure, i.e. with a Hamiltonian $T^k$-action on $(X, \omega_X)$ for some $1\leq k \leq n$ and a meromorphic map $F: X \dashrightarrow Y$ from $X$ to a toric K\"ahler manifold $Y$ of complex dimension $n-k$ equipped with a toric K\"ahler structure $\omega_Y$ satisfying the conditions in Definition \ref{defn:pseudotoric_structures}.
%We also write the toric moment map associated to $Y$ as $\nu = (\nu_1, \ldots, \nu_{n-k}): Y \to \mathbb{R}^{n-k} \cong \text{Lie}(T^{n-k})^*$.
Suppose that there is an anticanonical divisor $D \in |-K_X|$ and a meromorphic $n$-form $\Omega_X$ on $X$ which has no zeros and only simple poles along the divisor $D$, satisfying the following conditions:
\begin{itemize}
\item[(i)]
$D$ is $T^k$-invariant.

\item[(ii)]
The holomorphic map $F: X \setminus B \to Y$ restricts to a map $F: X \setminus D \to Y \setminus E$, where $E$ is the toric anticanonical divisor in $Y$.

\item[(iii)]
Given a basis $\{V_1,\dots,V_k\}$ of Hamiltonian vector fields generating the $T^k$-action on $(X, \omega_X)$, we have
\begin{equation}\label{eqn:condition_holom_vol_form}
\iota_{V_1}\cdots \iota_{V_k} \Omega_X = d\log F^*f_1 \wedge \cdots \wedge d\log F^*f_{n-k}
\end{equation}
for some $T^k$-invariant meromorphic functions $f_1, \ldots, f_{n-k}$ on $Y$.\footnote{Note that the LHS of \eqref{eqn:condition_holom_vol_form} differs only by a scalar multiple for different choices of the basis $\{V_1,\dots,V_k\}$, so this condition is independent of such a choice.}

\item[(iv)]
The functions $\nu_j := |f_j|$, $j = 1, \dots, n-k$ on the open dense orbit $Y_0 := Y\setminus E \cong (\mathbb{C}^*)^{n-k}$ are Poisson commuting with respect to the Poisson bracket induced by the toric symplectic structure $\omega_Y$ on $Y$.
%$$\nu' := (|f_1|, \ldots, |f_{n-k}|): Y_0 \to \mathbb{R}^{n-k}$$
%differs from the toric moment map
%$$\nu = (\nu_1, \ldots, \nu_{n-k}): Y_0 \to \mathbb{R}^{n-k}$$
%(restricted to ) by a diffeomorphism between the images of $\nu$ and $\nu'$.
\end{itemize}
Then the map
$$\rho := (\mu_1, \ldots, \mu_k, F^*\nu_1, \ldots, F^*\nu_{n-k}): X \setminus D \to \mathbb{R}^n$$
defines a special Lagrangian fibration on $X \setminus D$, where the special condition is with respect to the holomorphic volume form $\Omega_X$ on $X\setminus D$.
\end{prop}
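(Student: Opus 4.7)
The plan is to split the argument into two parts: first showing that $\rho$ defines a Lagrangian torus fibration on $X \setminus D$, and then verifying the special condition with respect to $\Omega_X$. The Lagrangian part should follow almost directly from Proposition~\ref{prop:Lagrangian_fibration}: by hypothesis (iv), the functions $\nu_j = |f_j|$ Poisson commute on the open torus orbit $Y_0 \subset Y$ with respect to $\omega_Y$, and condition (ii) ensures $F$ maps $X \setminus D$ into $Y_0$, so the two lemmas of Section~2 give that $\mu_1, \ldots, \mu_k, F^*\nu_1, \ldots, F^*\nu_{n-k}$ Poisson commute on $X \setminus D$. Hence $\rho$ is a Lagrangian fibration, and each regular fiber $L$ is tangent to (and generically spanned by) the Hamiltonian vector fields $V_1, \ldots, V_k, V_{F^*\nu_1}, \ldots, V_{F^*\nu_{n-k}}$.

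To verify the special condition on such a fiber, I would evaluate $\Omega_X|_L$ on this basis. Up to sign, condition (iii) reduces the computation to evaluating $d\log F^*f_1 \wedge \cdots \wedge d\log F^*f_{n-k}$ on $(V_{F^*\nu_1}, \ldots, V_{F^*\nu_{n-k}})$. Writing $F^*f_j = (F^*\nu_j)\, e^{i\phi_j}$ locally on $X \setminus D$ splits each $d\log F^*f_j$ as $d\log F^*\nu_j + i\, d\phi_j$, and the key observation is that
\[
d(F^*\nu_j)(V_{F^*\nu_\ell}) = \omega_X(V_{F^*\nu_j}, V_{F^*\nu_\ell}) = \{F^*\nu_j, F^*\nu_\ell\}_X = 0
\]
by the Poisson-commuting already established, so the real part of every entry in the $(n-k) \times (n-k)$ determinant representing this wedge product vanishes. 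The expression therefore collapses to $i^{n-k} \det\bigl(d\phi_j(V_{F^*\nu_\ell})\bigr)$, which is $i^{n-k}$ times a real number.

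Consequently $\Omega_X|_L$ is $i^{n-k}$ times a real-valued top form on $L$, and choosing the constant phase $\theta = -(n-k)\pi/2$ renders $e^{i\theta}\Omega_X|_L$ real-valued, yielding $\mathrm{Im}(e^{i\theta}\Omega_X|_L) = 0$ and hence the special Lagrangian condition. The main obstacle I anticipate is the local well-definedness of the splitting $F^*f_j = |F^*f_j|\, e^{i\phi_j}$: this is precisely why $D$ must contain the zero/pole locus of each $F^*f_j$, which is the content of condition~(ii) pulled back to $X$, so that $\phi_j$ exists locally on $X \setminus D$. All the resulting identities are tensorial and extend by continuity from the open dense locus where $F$ is holomorphic, so no issue arises at the base locus $B$.
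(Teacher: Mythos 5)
Your proposal is correct and follows essentially the same route as the paper's proof: Lagrangian-ness from Proposition \ref{prop:Lagrangian_fibration}, then evaluation of $\Omega_X$ on the Hamiltonian vector fields spanning the fiber, with condition (iii) reducing everything to $d\log F^*f_1\wedge\cdots\wedge d\log F^*f_{n-k}$ evaluated on $(V_{F^*\nu_1},\ldots,V_{F^*\nu_{n-k}})$, whose matrix entries are purely imaginary because $d(F^*\nu_j)(V_{F^*\nu_\ell})=\{F^*\nu_j,F^*\nu_\ell\}=0$, giving $i^{n-k}$ times a real number. The only cosmetic difference is that the paper justifies the spanning of $T_pL$ via the $T^k$-equivariance of the symplectic parallel transport together with condition \eqref{eqn:condition_Ham_vec_field}, whereas you invoke the standard integrable-systems fact for regular fibers directly; both are valid.
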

\begin{proof}
Proposition \ref{prop:Lagrangian_fibration} already shows that $\rho$ is a Lagrangian torus fibration. So we only need to prove that the fibers are special with respect to $\Omega_X$.
%Since conditions (ii) and (iv) guarantee that $\rho$ and the map
%$$\rho' := (\mu_1, \ldots, \mu_k, F^*|f_1|, \ldots, F^*|f_{n-k}|): X \setminus D \to \mathbb{R}^n$$
%share the same fibers, we can restrict our attention to $\rho'$.

The following argument is along the lines of \cite[Proof of Proposition 5.2]{Aur1}.
First note that the torus $T^k$ acts on $X$ by symplectomorphisms and the holomorphic map $F: X\setminus B \to Y$ is $T^k$-invariant. Hence the parallel transport induced by the symplectic connection $\nabla_F$ is $T^k$-equivariant. In particular, any Lagrangian fiber $L$ of $\rho$ is invariant under the parallel transport along the corresponding fiber of the map $\nu = (\nu_1,\dots,\nu_{n-k}): Y_0 \to \mathbb{R}^{n-k}$. Thus the horizontal lifts $\nabla_F V_{|f_j|}$ ($j = 1, \ldots, n-k$) are tangent to $L$. But condition \eqref{eqn:condition_Ham_vec_field} says that each $\nabla_F V_{|f_j|}$ is parallel to $V_{F^*|f_j|}$. Therefore, the tangent space to any point on $L$ is spanned by the vector fields $V_1, \ldots, V_k$ (which generate the $T^k$-action) and $V_{F^*|f_1|}, \ldots, V_{F^*|f_{n-k}|}$.

Now at every point on the Lagrangian fiber $L$, we have, by condition (iii) or more precisely equation \eqref{eqn:condition_holom_vol_form}, that
\begin{align*}
  & \Omega_X|_L (V_1, \ldots, V_k, V_{F^*|f_1|}, \ldots, V_{F^*|f_{n-k}|}) \\
= & (\iota_{V_1}\cdots \iota_{V_k} \Omega_X)|_L (V_{F^*|f_1|}, \ldots, V_{F^*|f_{n-k}|})\\
= & (d\log F^*f_1 \wedge \cdots \wedge d\log F^*f_{n-k})|_L (V_{F^*|f_1|}, \ldots, V_{F^*|f_{n-k}|}).
%= & (d\log F^*|f_1| \wedge \cdots \wedge d\log F^*|f_{n-k}|)(V_{F^*|f_1|}, \ldots, V_{F^*|f_{n-k}|}),
\end{align*}
Since $V_{F^*|f_j|}$ is tangent to the level sets of $F^*|f_j|$, we have either $\text{Im } \Omega_X|_L = 0$ (when $n-k$ is even) or $\text{Re }\Omega_X|_L = 0$ (when $n-k$ is odd).
\end{proof}

\subsection{The two-step flag variety $F\ell_{1,n-1; n}$}\label{sec:Omega_two-step_flag}

In this subsection, we show that the condition \eqref{eqn:condition_holom_vol_form} is satisfied for the pseudotoric structure on the two-step flag variety $F\ell_{1,n-1; n}$ we constructed in Theorem \ref{thm:pseudotoric_two-step_flag}.

Take any anti-canonical divisor $D \in |-K_X|$ of $X$, we obtain an open Calabi-Yau manifold $X\setminus D$. In this section, we will specify several choices of $D$, and then construct special Lagrangian fibrations on $X\setminus D$ with respect to a meromorphic volume form $\Omega_{X}$ on $X$ that has no zeros and only simple poles along $D$. One of such choices will be given by Schubert divisors; we refer to \cite[Lemma 3.5]{FuWo} for the description of the anti-canonical divisor class $[-K_X]$ by Schubert divisor classes for a general homogeneous variety $G/P$. We notice that the two-step flag variety $F\ell_{1,n-1; n}$ and the quadrics are all special cases of such homogeneous varieties.

Recall that the natural action of
$T=\{\text{diag}(t_1, \ldots, t_n)\in (\mathbb{S}^1)^n \mid t_1t_2\cdots  t_n=1\}$
(as a maximal compact torus of the maximal complex torus $T_\mathbb{C}$ of $SL(n, \mathbb{C})$)   on $X$ was given in \eqref{actiontwostep}.
We shall specify  a basis $V_1, \ldots, V_{n-1}$ of Hamiltonian vector fields generated by the torus action of $T$.
As we will see later, our choices of $\Omega_X$ are very nice in the sense that the condition
$$\iota_{V_1}\cdots \iota_{V_{n-1}}\Omega_X =\pm d\log f_1\wedge\cdots d\log f_{n-2}$$
is satisfied, and from this we can construct a special Lagrangian torus fibration (with singular fibers)
$$\Phi:=(\mu_1, \ldots, \mu_{n-1}, |f_1|, \ldots, |f_{n-2}|): X\setminus D \longrightarrow \mathbb{R}^{2n-3},$$
where $(\mu_1,\ldots, \mu_{n-1}): X\longrightarrow \mathbb{R}^{n-1}$ is the moment map of the torus action of $T$.

\subsubsection{Contraction by torus-invariant holomorphic vector fields}
On the open subset $x_1\neq 0, x_{\hat n}\neq 0$, we have local coordinates $(z_2, z_3,\ldots, z_n, z_{\hat 2},\ldots, z_{\widehat{n-1}})$ of $X$.

Let $\theta_k$ denote the weight of $t_k$. Then for $1\leq j\leq n-1$, the weight of the action of $T$ on the coordinate $z_{j+1}$  is given by
$$\psi_{j}:=\theta_{j+1}-\theta_1.$$
For $2\leq k\leq n-1$, the weight of the action of $T$ on the coordinate $z_{\hat k}$ is given by
$$\theta_{n}-\theta_k=\psi_{n-1}-\psi_{k-1}.$$

Let $V_i$ be the Hamiltonian vector field corresponding to the weight $\psi_i$. Denote by $c_y$ the coefficient of $\psi_i$ in the weight for a coordinate. Then we have % \textcolor{red}{\textit{Need to make sure that the description in this paragraph is correct:}}
$$\iota_{V_i}=\sum_y c_y\cdot y \cdot \iota_{\partial_y},$$
where $\iota_{\partial_y}$ denotes the contraction with the holomorphic vector field ${\partial\over \partial y}$ (on the left). We simply denote $\iota_{\partial_y}$ as $\iota_y$. We also adapt the notation convention that $z_1=1, z_{\hat n}=1$, and denote
$$\Omega:=dz_2dz_{\hat 2}\cdots dz_{n-1}dz_{\widehat{n-1}}dz_n,\quad A_j:=z_jz_{\hat j},\ j=1,\dots, n.$$

\begin{lemma}\label{lem:contrusual}
$$ \iota_{V_1}\cdots \iota_{V_{n-1}}\Omega= (-1)^{n} A_1dA_3\cdots  dA_n+\sum_{j=3}^{n}(-1)^{n+j} A_jdA_1dA_3\cdots \widehat{dA_j}\cdots dA_n.$$
\end{lemma}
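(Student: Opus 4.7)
The plan is to carry out the contraction directly in the given local coordinates, in two stages. First, each of $\iota_{V_1}, \ldots, \iota_{V_{n-2}}$ touches only the single paired factor $\eta_i := dz_{i+1}\wedge dz_{\widehat{i+1}}$ and converts it into $dA_{i+1}$; second, the Euler-type contraction $\iota_{V_{n-1}}$ scales the $A_j$'s simultaneously. The weight table preceding the lemma gives
\[
\iota_{V_i} = z_{i+1}\iota_{z_{i+1}} - z_{\widehat{i+1}}\iota_{z_{\widehat{i+1}}} \quad (1 \le i \le n-2), \qquad \iota_{V_{n-1}} = z_n\iota_{z_n} + \sum_{k=2}^{n-1} z_{\hat k}\iota_{z_{\hat k}},
\]
from which follow the two key identities: (a) $\iota_{V_i}(\eta_i) = z_{i+1}\,dz_{\widehat{i+1}} + z_{\widehat{i+1}}\,dz_{i+1} = dA_{i+1}$ for $1\le i\le n-2$, and (b) $\iota_{V_{n-1}}(dA_j) = A_j$ for all $2\le j\le n$ (using $A_n = z_n$ and the fact that every $A_j$ has weight $\psi_{n-1}$ under the torus action). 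Moreover, for $i \le n-2$ the vector field $V_i$ annihilates every $A_j$ and $z_n$, so $\iota_{V_i}$ only acts nontrivially on $\eta_i$.

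Writing $\Omega = \eta_1\wedge \eta_2\wedge\cdots\wedge \eta_{n-2}\wedge dz_n$, a straightforward induction on $i$ using (a) should establish
\[
\iota_{V_i}\iota_{V_{i-1}}\cdots\iota_{V_1}\Omega = (-1)^{\binom{i}{2}}\, dA_2\wedge\cdots\wedge dA_{i+1}\wedge \eta_{i+1}\wedge\cdots\wedge \eta_{n-2}\wedge dz_n,
\]
the sign $(-1)^{\binom{i}{2}}$ arising because at each stage $\iota_{V_j}$ must be commuted past the already-produced degree-$(j-1)$ block $dA_2\wedge\cdots\wedge dA_j$. Setting $i = n-2$ and then applying $\iota_{V_{n-1}}$ via (b), with $dz_n = dA_n$, yields
\[
\iota_{V_{n-1}}\iota_{V_{n-2}}\cdots\iota_{V_1}\Omega = (-1)^{\binom{n-2}{2}}\sum_{j=2}^{n}(-1)^{j}A_j\, dA_2\wedge\cdots\widehat{dA_j}\cdots\wedge dA_n.
\]
Since the $\iota_{V_i}$ anticommute, reversing the order of the $n-1$ contractions to match $\iota_{V_1}\cdots\iota_{V_{n-1}}$ introduces an additional sign $(-1)^{\binom{n-1}{2}}$; the parity of $\binom{n-1}{2}+\binom{n-2}{2} = (n-2)^2$ equals that of $n$, so altogether
\[
\iota_{V_1}\cdots\iota_{V_{n-1}}\Omega = (-1)^{n}\sum_{j=2}^{n}(-1)^{j}A_j\, dA_2\wedge\cdots\widehat{dA_j}\cdots\wedge dA_n.
\]

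The last step is to identify this with the formula in the lemma statement. In the given chart the defining Pl\"ucker equation $z_2z_{\hat 2} = z_{\hat 1} + (-1)^{n-1}z_n + \sum_{j=3}^{n-1}(-1)^{j-1}z_jz_{\hat j}$ rewrites as $A_1 = A_2 + \sum_{j=3}^{n}(-1)^{j}A_j$, and correspondingly $dA_1 = dA_2 + \sum_{j=3}^{n}(-1)^{j}dA_j$. Substituting these two relations into the right-hand side of the lemma and using wedge antisymmetry to cancel the repeated $dA_j$'s (each choice $dA_k$ with $k\ge 3$ inside $dA_1$ meets a copy of itself) reduces it to the expression just displayed. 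The main technical nuisance throughout is sign bookkeeping: the Leibniz signs accumulated during the iterated contraction and the sign produced by reordering the interior products at the end.
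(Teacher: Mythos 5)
Your proposal is correct and follows essentially the same route as the paper's proof: both reduce the iterated contraction to the intermediate identity $\iota_{V_1}\cdots\iota_{V_{n-1}}\Omega=\sum_{j=2}^{n}(-1)^{n+j}A_j\,dA_2\cdots\widehat{dA_j}\cdots dA_n$ via the elementary facts $\iota_{V_i}(dz_{i+1}\wedge dz_{\widehat{i+1}})=dA_{i+1}$ and $\iota_{V_{n-1}}dA_j=A_j$, and then trade $dA_2$ for $dA_1$ using the Pl\"ucker relation $A_2=A_1+\sum_{j=3}^n(-1)^{j-1}A_j$. Your sign bookkeeping ($\binom{i}{2}$, $\binom{n-1}{2}$, and $(n-2)^2\equiv n \pmod 2$) and the final substitution both check out; the only cosmetic difference is that you run the Pl\"ucker substitution from the lemma's right-hand side back to the computed expression rather than forward.
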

\begin{proof}
We notice that $\iota_{V_{i-1}}=z_i\iota_{z_i}-z_{\hat i}\iota_{z_{\hat{i}}}$ for $i=2,\dots, n-1$, and that
$\iota_{V_{n-1}}=z_n\iota_{z_5}+\sum_{j=2}^{n-1} z_{\hat j}\iota_{z_{\hat j}}$.
Thus for $2\leq i\leq n-1$, we have $\iota_{V_{i-1}} dz_idz_{\hat i}= z_i dz_i+z_{\hat i} dz_{\hat i}= d(z_iz_{\hat i})=dA_i$.
Since $A_2=A_1+\sum_{j=1}^n (-1)^{j-1}A_j$, we have
\begin{align*}
\iota_{V_1}\cdots \iota_{V_{n-1}}\Omega&=A_ndA_2dA_3\cdots dA_{n-1}+\sum_{j=2}^{n-1}\iota_{V_1}\cdots \iota_{V_{n-2}}z_{\hat j}\iota_{z_{\hat j}} dz_1dz_{\hat 1}\cdots dz_{n-1}dz_{\widehat{n-1}}dA_n \\
&=\sum_{j=2}^{n}(-1)^{n-j} A_jdA_2\cdots \widehat{dA_j}\cdots dA_n\\
&=(-1)^{n-2} A_2dA_3\cdots  dA_n+\sum_{j=3}^{n}(-1)^{n-j} A_j\big(dA_1+(-1)^{j-1}dA_j)dA_3\cdots \widehat{dA_j}\cdots dA_n\\
&=\sum_{k=2}^n(-1)^{n-k} A_kdA_3\cdots  dA_n+\sum_{j=3}^{n}(-1)^{n-j} A_jdA_1dA_3\cdots \widehat{dA_j}\cdots dA_n\\
&= (-1)^{n} A_1dA_3\cdots  dA_n+\sum_{j=3}^{n}(-1)^{n+j} A_jdA_1dA_3\cdots \widehat{dA_j}\cdots dA_n.
\end{align*}
\end{proof}

\subsubsection{Anti-canonical divisor by Schubert divisors}

There is an anti-canonical divisor of $X$ defined as follows, which consists of Schubert divisors of $X$ up to translations by the Weyl group of $SL(n, \mathbb{C})$.
$$D^{\rm Sch}:=\{x_1x_{\hat 1} x_3x_{\hat 3}\cdots x_nx_{\hat n}=0\}\cap X,$$
where we treat the intersection as that for subvarieties in $\mathbb{P}^{n-1}\times \mathbb{P}^{n-1}$. Take the nowhere vanishing holomorphic volume form $\Omega_X^{\rm Sch}$ on $X\setminus D^{\rm Sch}$ given  by
$$\Omega_X^{\rm Sch}:={\Omega\over A_1A_3A_4\cdots A_n}$$
on the open subset $x_1\neq 0, x_{\hat n}\neq 0$. It is meromorhphic on $X$, and has simple poles along the divisor $D^{\rm Sch}$.

\begin{prop}\label{prop:anticanSchu}
We have
$$\iota_{V_1}\cdots \iota_{V_{n-1}}\Omega_X^{\rm Sch}=(-1)^{n-1}d\log f_1\wedge d\log f_2\cdots \wedge d\log f_{n-2},$$
where $f_1={A_1\over A_3}={x_{1}x_{\hat 1}\over x_3x_{\hat 3}}$, and $f_j={A_{j+2}\over A_3}={x_{j+2}x_{\widehat {j+2}}\over x_3x_{\hat 3}}$ for $j=2,\dots, n-2$.
\end{prop}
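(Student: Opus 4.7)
The strategy is to reduce the identity to a purely formal algebraic statement among the one-forms $\alpha_j := d\log A_j$, and then verify it by direct expansion using $\alpha_j \wedge \alpha_j = 0$.

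The first step is to pull the scalar factor out of the interior contractions. Each $A_j = z_j z_{\hat j}$ for $j \in \{1,3,4,\ldots,n\}$ is $T$-invariant: if $\theta_j$ denotes the weight of $t_j$, then $x_j$ has weight $\theta_j$ while $x_{\hat j}$ has weight $-\theta_j$ under the traceless constraint $\theta_1 + \cdots + \theta_n = 0$, so $A_j$ has weight zero. Consequently $V_i(A_j) = 0$ for all $i$, so interior contraction by each $V_i$ commutes with multiplication by the scalar function $1/(A_1 A_3 \cdots A_n)$. Combining this with Lemma \ref{lem:contrusual} and rewriting $dA_j/A_j$ as $\alpha_j$, one obtains
\begin{align*}
\iota_{V_1} \cdots \iota_{V_{n-1}} \Omega_X^{\rm Sch}
&= \frac{1}{A_1 A_3 \cdots A_n}\,\iota_{V_1} \cdots \iota_{V_{n-1}} \Omega \\
&= (-1)^{n}\, \alpha_3 \wedge \cdots \wedge \alpha_n + \sum_{j=3}^n (-1)^{n+j}\, \alpha_1 \wedge \alpha_3 \wedge \cdots \wedge \widehat{\alpha_j} \wedge \cdots \wedge \alpha_n.
\end{align*}

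The second step is to expand $(-1)^{n-1} d\log f_1 \wedge \cdots \wedge d\log f_{n-2}$ and verify it agrees with the expression above. Since $d\log f_1 = \alpha_1 - \alpha_3$ and $d\log f_k = \alpha_{k+2} - \alpha_3$ for $2 \leq k \leq n-2$, multilinearity gives
$$
\bigwedge_{k=1}^{n-2} d\log f_k \;=\; (\alpha_1 - \alpha_3) \wedge \bigwedge_{k=4}^n (\alpha_k - \alpha_3).
$$
Using $\alpha_3 \wedge \alpha_3 = 0$, the expansion of $\bigwedge_{k=4}^n(\alpha_k - \alpha_3)$ retains only the ``all $\alpha_k$'' term and the terms with exactly one factor of $-\alpha_3$. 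Moving that $\alpha_3$ to the leftmost slot produces a sign $(-1)^{j-3}$ in the term where $\alpha_j$ is replaced, and then wedging with $(\alpha_1 - \alpha_3)$ on the left (discarding the terms in which $\alpha_3$ would appear twice) reproduces $(-1)^{1-n}$ times the expression from step one.

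The only real obstacle is the sign bookkeeping in the second step: one must check that the sign $(-1)^{j-3}$ arising from the transposition, combined with the overall $(-1)^{n-1}$ and the extra sign from the $-\alpha_3$ inside $(\alpha_1 - \alpha_3)$, reproduces the factor $(-1)^{n+j}$ appearing in Lemma \ref{lem:contrusual}. Everything else is immediate from the $T$-invariance of the $A_j$'s and Lemma \ref{lem:contrusual}. The low-dimensional cases $n=3$ (both sides equal $\alpha_1 - \alpha_3$) and $n=4$ (a three-term identity) serve as quick sanity checks.
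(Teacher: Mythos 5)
Your proposal is correct and follows essentially the same route as the paper: divide Lemma \ref{lem:contrusual} by $A_1A_3\cdots A_n$ and match it term-by-term against the expansion of $(-1)^{n-1}(\alpha_1-\alpha_3)\wedge\bigwedge_{k=4}^{n}(\alpha_k-\alpha_3)$, and the sign bookkeeping you flag does close up, since $(-1)^{n-1}\cdot(-1)^{j-3}=(-1)^{n+j}$. One small correction: $A_j=z_jz_{\hat j}=x_jx_{\hat j}/(x_1x_{\hat n})$ has weight $\theta_n-\theta_1\neq 0$, so it is \emph{not} $T$-invariant (only the ratios $f_j$ are); fortunately this is harmless, because $\iota_{V}(f\omega)=f\,\iota_{V}\omega$ holds for any function $f$, the interior product being $C^\infty$-linear in the form argument.
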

\begin{proof}
Denote by RHS by the expression  on the right hand side of the expected equality. By direct calculations, we have
\begin{align*}
\text{RHS}&=(-1)^{n-1}({dA_1\over A_1}-{d{A_3}\over A_3})({dA_4\over A_4}-{d{A_3}\over A_3})\cdots ({dA_n\over A_n}-{d{A_3}\over A_3})\\
&=(-1)^{n-1}{A_3dA_1-A_1d{A_3}\over A_1A_3}\cdot {A_3dA_4- A_4d{A_3}\over A_3A_4} \cdots {A_3dA_n- A_nd{A_3}\over A_3A_n}\\
&=(-1)^{n-1}{-A_1d{A_3}dA_4\cdots dA_n  +\sum_{j=3}^n(-1)^{j+1}A_jdA_1dA_3\cdots \widehat{dA_j}\cdots dA_n\over A_1A_3A_4\cdots A_n}\\
&= {\iota_{V_1}\cdots \iota_{V_{n-1}}\Omega\over A_1A_3A_4\cdots A_n}.
\end{align*}
The last equality follows from Lemma \ref{lem:contrusual}.
\end{proof}

Let us consider some other choices of meromorphic volume forms on $X$. Denote
$$B_j:=\sum_{k=j}^{n}(-1)^{k-j}x_kx_{\hat k},\quad j=3,\dots, n. $$
By abuse of notations, we also denote by $B_j$ the function on the open subset $x_1\neq 0, x_{\hat n}\neq 0$. Notice that
$$A_n=B_n,\quad A_j=B_j+B_{j+1},\quad j=3,\dots, n-1.$$
For any $3\leq j\leq n$, we consider a divisor of $X$ defined by
$$D^{\scriptscriptstyle (j)}:=\{x_1x_{\hat 1}x_3x_{\hat 3}\cdots x_{j-1}x_{\widehat{j-1}}B_j\cdots B_n=0\}.$$

Notice that when $j = n$, we have $D^{\scriptscriptstyle (n)} = D^{\rm Sch}$. We also remark that $D^{\rm Rie} := D^{\scriptscriptstyle (3)}$ is the anticanonical divisor which corresponds to Rietsch's mirror \cite{Riet}.

\begin{prop}\label{prop:antigeneral}
For any $3\leq j\leq n$, we have
$$\iota_{V_1}\cdots \iota_{V_{n-1}}{\Omega_X^{\scriptscriptstyle (j)}}=(-1)^{n-j}d\log {x_1x_{\hat 1}\over B_j}\wedge d\log {x_3x_{\hat 3}\over B_j}\wedge \cdots \wedge d\log {x_{j-1}x_{\widehat{j-1}}\over B_j}\wedge  d\log {B_{j+1}\over B_j}\wedge \cdots \wedge d\log{B_n\over B_j},$$
where $$\Omega_X^{\scriptscriptstyle (j)} := {\Omega\over A_1A_3\cdots A_{j-1}B_j\cdots B_n}.$$
\end{prop}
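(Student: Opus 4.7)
The plan is to proceed by downward induction on $j$, from $j = n$ down to $j = 3$, denoting the right-hand side of the proposed identity by $\beta_j$. The base case $j = n$ is the statement that $\beta_n$ equals $\iota_{V_1}\cdots\iota_{V_{n-1}}\Omega_X^{\scriptscriptstyle (n)}$. Since $B_n = A_n$, we have $\Omega_X^{\scriptscriptstyle (n)} = \Omega_X^{\rm Sch}$, so by Proposition \ref{prop:anticanSchu} this reduces to the wedge identity
\[
\bigwedge_{i \in \{1,3,\ldots,n-1\}} d\log(A_i/A_n) \;=\; (-1)^{n-1}\, d\log(A_1/A_3) \wedge d\log(A_4/A_3) \wedge \cdots \wedge d\log(A_n/A_3),
\]
which one verifies by expanding both sides in the basis $\{\Omega_m\}$ of $(n-2)$-forms built from the $n-1$ one-forms $d\log A_k$ ($k \in \{1, 3, \ldots, n\}$), where $\Omega_m$ is the wedge of all such $d\log A_k$ save $d\log A_m$, and matching the coefficient of each $\Omega_m$.

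For the inductive step, observe the multiplicative relation $\Omega_X^{\scriptscriptstyle (j)} = (A_j/B_j)\,\Omega_X^{\scriptscriptstyle (j+1)}$, which implies $\iota_{V_1}\cdots\iota_{V_{n-1}}\Omega_X^{\scriptscriptstyle (j)} = (A_j/B_j)\,\beta_{j+1}$ by the induction hypothesis. It thus suffices to prove the identity of forms $A_j\beta_{j+1} = B_j\beta_j$. The central idea is to extract a common $(n-3)$-form
\[
\mathcal{W} \;:=\; \bigwedge_{i \in \{1,3,\ldots,j-1\}} d\log(A_i/B_{j+1}) \;\wedge\; \bigwedge_{k \in \{j+2,\ldots,n\}} d\log(B_k/B_{j+1}),
\]
by reordering wedge factors. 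A careful sign tracking yields
\[
\beta_{j+1} \;=\; \mathcal{W} \wedge d\log(A_j/B_{j+1}), \qquad \beta_j \;=\; -\mathcal{W} \wedge d\log(B_{j+1}/B_j).
\]
For the second identity, the key step is to substitute $d\log(u/B_j) = d\log(u/B_{j+1}) + d\log(B_{j+1}/B_j)$ in every factor of $\beta_j$ except the already-present $d\log(B_{j+1}/B_j)$ from $k = j+1$; any additional copy of $d\log(B_{j+1}/B_j)$ then wedges to zero.

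Combining the two representations,
\[
A_j\beta_{j+1} - B_j\beta_j \;=\; \mathcal{W} \wedge \bigl[\,A_j\,d\log(A_j/B_{j+1}) + B_j\,d\log(B_{j+1}/B_j)\,\bigr],
\]
and a short computation using $dA_j = dB_j + dB_{j+1}$ shows the bracket equals $dA_j - dB_j - dB_{j+1} = 0$. This completes the induction. The main obstacle will be the sign bookkeeping in the two factorizations of $\beta_{j+1}$ and $\beta_j$ through the common $\mathcal{W}$; once correctly organized, the vanishing of the bracketed $1$-form is immediate from the defining relation $A_j = B_j + B_{j+1}$.
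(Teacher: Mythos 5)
Your proof is correct, and it shares the paper's overall architecture: downward induction on $j$ with the base case $j=n$ obtained from Proposition \ref{prop:anticanSchu} by a change of the ``reference'' term in the $d\log$'s, and the inductive step driven by the single relation $A_j=B_j+B_{j+1}$ (equivalently $\Omega_X^{\scriptscriptstyle (j)}=(A_j/B_j)\,\Omega_X^{\scriptscriptstyle (j+1)}$). Where you genuinely diverge is in how the inductive step is executed. The paper clears all denominators, expands $D^{\scriptscriptstyle (j)}\cdot\mathrm{RHS}^{\scriptscriptstyle (j)}$ into an explicit sum of $(n-2)$-forms with polynomial coefficients in the $A$'s and $B$'s, and matches this term-by-term with $D^{\scriptscriptstyle (j+1)}\cdot\mathrm{RHS}^{\scriptscriptstyle (j+1)}$ after substituting $dA_j=dB_j+dB_{j+1}$. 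You instead stay at the level of $d\log$ forms, extract the common $(n-3)$-form $\mathcal{W}$ from both $\beta_j$ and $\beta_{j+1}$ (using that repeated copies of $d\log(B_{j+1}/B_j)$ wedge to zero), and reduce the whole step to the vanishing of the single $1$-form $A_j\,d\log(A_j/B_{j+1})+B_j\,d\log(B_{j+1}/B_j)=dA_j-dB_j-dB_{j+1}$. I checked your two factorizations $\beta_{j+1}=\mathcal{W}\wedge d\log(A_j/B_{j+1})$ and $\beta_j=-\mathcal{W}\wedge d\log(B_{j+1}/B_j)$, including the signs $(-1)^{n-j-1}$ from commuting the distinguished factor past the $B$-block, and they are right. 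Your route buys a shorter and more conceptual inductive step, localizing the entire computation in one $1$-form identity; the paper's expansion is more mechanical but has the side benefit of exhibiting $\iota_{V_1}\cdots\iota_{V_{n-1}}\Omega$ explicitly (tying back to Lemma \ref{lem:contrusual}) for every $j$ simultaneously. Your base-case verification by expanding in the basis $\{\Omega_m\}$ is also fine, though the paper's telescoping substitution $d\log f_i\mapsto d\log f_i-d\log f_{n-2}$ gets there slightly faster.
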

\begin{proof}
Denote by $\text{RHS}^{\scriptscriptstyle (j)}$ the right hand side of the expected equality. Since $D^{\scriptscriptstyle (n)} = D^{\rm Sch}$, by Proposition \ref{prop:anticanSchu} we have
\begin{align*}
\iota_{V_1}\cdots \iota_{V_{n-1}}\Omega_X^{\rm Sch}&=(-1)^{n-1}d\log f_1\wedge d\log f_2\cdots \wedge d\log f_{n-2}\\
&=(-1)^{n-1}(d\log f_1-d\log f_{n-2})\wedge \cdots \wedge (d\log f_{n-3}-d\log f_{n-2})\wedge d\log f_{n-2}\\
&=(-1)^{n-1}d\log {x_1x_{\hat 1}\over x_{n}x_{\hat n}}\wedge d\log {x_4x_{\hat 4}\over x_{n}x_{\hat n}}\wedge \cdots \wedge d\log {x_{n-1}x_{\widehat{n-1}}\over x_{n}x_{\hat n}}\wedge d\log {x_nx_{\hat n}\over x_{3}x_{\hat 3}}\\
&= d\log {x_1x_{\hat 1}\over B_n}\wedge d\log {x_3x_{\hat 3}\over B_n}\wedge \cdots \wedge d\log {x_{n-1}x_{\widehat{n-1}}\over B_n}.
\end{align*}
Namely if $j=n$, then the statement holds.

Now we consider   $3\leq j<n$ and recall $A_j=B_j+B_{j+1}$. Therefore
\begin{align*}
  & \sum_{k=j}^{j+1}(-1)^{k+n}B_kdA_1dA_3\cdots  {dA_{j-1}}dA_jdB_{j+1}\cdots \widehat{dB_k}\cdots dB_n\\
= & (-1)^{j+n}(A_j-B_{j+1})dA_1dA_3\cdots   dA_{j-1}dB_{j+1}dB_{j+2}\cdots dB_n\\
  & \quad+ (-1)^{j+1+n}B_{j+1}dA_1dA_3\cdots  dA_{j-1}dB_jdB_{j+2}\cdots dB_n\\
= & (-1)^{j+n} A_jdA_1dA_3\cdots   dA_{j-1}dB_{j+1}dB_{j+2}\cdots dB_n\\
  & + (-1)^{j+1+n}B_{j+1}dA_1dA_3\cdots  dA_{j-1}dA_jdB_{j+2}\cdots dB_n\\
\end{align*}
Thus by expanding $\text{RHS}^{\scriptscriptstyle (j)}$, we have
\begin{align*}
D^{\scriptscriptstyle (j)}\cdot \text{RHS}^{\scriptscriptstyle (j)}
& = (-1)^{n}A_1d{A_3}\cdots dA_{j-1}dB_{j}\cdots dB_n\\
&   \qquad + \sum_{i=3}^{j-1}(-1)^{i+n}A_idA_1dA_3\cdots \widehat{dA_i}\cdots dA_{j-1}dB_j\cdots dB_n\\
&   \qquad + \sum_{k=j}^{n}(-1)^{k+n}B_kdA_1dA_3\cdots {dA_{j-1}}dB_j\cdots \widehat{dB_k}\cdots dB_n\\
& = (-1)^{n}A_1d{A_3}\cdots dA_{j-1}dA_{j}dB_{j+1}\cdots dB_n\\
&   \qquad + \sum_{i=3}^{j}(-1)^{i+n}A_idA_1dA_3\cdots \widehat{dA_i}\cdots dA_{j-1}dA_jdB_{j+1}\cdots dB_n\\
&   \qquad + \sum_{k=j+1}^{n}(-1)^{k+n}B_kdA_1dA_3\cdots {dA_{j-1}}dA_jdB_{j+1}\cdots \widehat{dB_k}\cdots dB_n\\
& = D^{\scriptscriptstyle (j+1)}\cdot \text{RHS}^{\scriptscriptstyle (j+1)}.
\end{align*}
By induction on $j$, we conclude that, for $3\leq j\leq n$,
$$D^{\scriptscriptstyle (j)}\cdot \text{RHS}^{\scriptscriptstyle (j)}=D^{\scriptscriptstyle (n)}\cdot \text{RHS}^{\scriptscriptstyle (n)}=\iota_{V_1}\cdots \iota_{V_{n-1}}{\Omega}.$$
\end{proof}

To apply Proposition \ref{prop:sLag_fibration}, we need to check that the functions $|f_1|, \dots, |f_{n-2}|$ are Poisson commuting with respect to the Poisson bracket induced by the Fubini-Study K\"ahler form $\omega_{FS}$ on $\mathbb{P}^{n-2}$. To see this, first note that the norms of the inhomogeneous coordinates $w_1 := \frac{y_0}{y_1}, w_2 := \frac{y_2}{y_1}, \dots, w_{n-2} := \frac{y_{n-2}}{y_1}$ are Poisson commuting since they define the log map
$$Log: \mathbb{P}^{n-2}\setminus E \cong (\mathbb{C}^*)^{n-2} \to \mathbb{R}^{n-2}, (w_1,\dots,w_{n-2}) \mapsto (\log|w_1|,\dots,\log|w_{n-2}|),$$
which is a Lagrangian torus fibration; here $E$ is the union of coordinate hyperplanes defined by $y_0y_1\cdots y_{n-2} = 0$. But the functions $f_1, \dots, f_{n-2}$ are pullbacks of the inhomogeneous coordinates $w_1, \dots, w_{n-2}$ by linear isomorphisms $\sigma: \mathbb{P}^{n-2} \to \mathbb{P}^{n-2}$ (given by lower triangular matrices) which obviously preserve $\omega_{FS}$ and hence the Poisson structure. So $|f_1|, \dots, |f_{n-2}|$ are Poisson commuting as well. Proposition \ref{prop:antigeneral} together with Proposition \ref{prop:sLag_fibration} then give the following theorem.

\begin{thm}\label{thm:slag_two-step_flag}
For each $D = D^{\scriptscriptstyle (j)}$, the map
$$\rho := (\mu_1, \ldots, \mu_k, F^*\nu_1, \ldots, F^*\nu_{n-k}): X \setminus D \to \mathbb{R}^n$$
defines a special Lagrangian fibration on $X \setminus D$, where the special condition is with respect to the holomorphic volume form $\Omega_X^{\scriptscriptstyle (j)}$ on $X\setminus D$.
\end{thm}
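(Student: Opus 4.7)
The plan is to invoke Proposition \ref{prop:sLag_fibration} directly. The pseudotoric structure on $X = F\ell_{1,n-1;n}$ is supplied by Theorem \ref{thm:pseudotoric_two-step_flag}, so everything reduces to checking the four hypotheses (i)--(iv) of that proposition for the meromorphic top form $\Omega_X^{\scriptscriptstyle (j)}$ and the divisor $D^{\scriptscriptstyle (j)}$, for each $j \in \{3, \ldots, n\}$.

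Hypotheses (i) and (iii) are essentially immediate given what precedes. For (i), the constraint $\sum_i \theta_i = 0$ on $T_{\mathbb{C}} \subset SL(n,\mathbb{C})$ forces $x_{\hat k}$ to carry weight $-\theta_k$, so each product $x_k x_{\hat k}$ is $T$-invariant; hence each linear combination $B_\ell$ is also $T$-invariant, and $D^{\scriptscriptstyle (j)}$ is cut out by a product of $T$-invariants. Hypothesis (iii) is exactly the content of Proposition \ref{prop:antigeneral}, the required torus-invariant meromorphic functions on $Y = \mathbb{P}^{n-2}$ being the ratios $f_i$ displayed there.

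For (ii) and (iv) the key is a single change of homogeneous coordinates on $\mathbb{P}^{n-2}$: replace the original coordinates $y_i = x_i x_{\hat i}$ by
$$\widetilde y_0 = x_1 x_{\hat 1},\ \ldots,\ \widetilde y_{j-3} = x_{j-1} x_{\widehat{j-1}},\ \widetilde y_{j-2} = B_j,\ \widetilde y_{j-1} = B_{j+1},\ \ldots,\ \widetilde y_{n-2} = B_n.$$
This is a unipotent linear transformation $\sigma \in PGL_{n-1}$. In the $\widetilde y$-chart the $f_i$ of Proposition \ref{prop:antigeneral} become the standard inhomogeneous coordinates, and the defining equation of $D^{\scriptscriptstyle (j)}$ becomes a product of $\widetilde y$'s, i.e., the toric anticanonical divisor $E$. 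This gives (ii), since $D^{\scriptscriptstyle (j)} = F^{-1}(\sigma^{-1}(E))$. For (iv), the $|f_i|$ are then the norms of the inhomogeneous coordinates of $(\mathbb{C}^*)^{n-2} \cong \mathbb{P}^{n-2} \setminus \sigma^{-1}(E)$, and the discussion immediately preceding the theorem records that they Poisson commute with respect to $\omega_{FS}$ because $\sigma$ arises from a toric isomorphism. Having verified (i)--(iv), Proposition \ref{prop:sLag_fibration} delivers the special Lagrangian fibration. The only genuine step is the coordinate change above and the accompanying Poisson compatibility; the rest is assembly of prior results, and I do not expect any substantial obstacle.
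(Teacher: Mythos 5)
Your proposal is correct and follows essentially the same route as the paper: the paper likewise obtains the theorem by combining Proposition \ref{prop:antigeneral} (which supplies condition (iii) of Proposition \ref{prop:sLag_fibration}) with the observation that the $f_i$ are pullbacks of the standard inhomogeneous coordinates on $\mathbb{P}^{n-2}$ under a (lower-triangular) linear isomorphism compatible with the Poisson structure, giving condition (iv), and then invokes Proposition \ref{prop:sLag_fibration}. Your explicit checks of conditions (i) and (ii) merely spell out steps the paper leaves implicit.
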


Observe that the functions $\mu_1, \ldots, \mu_k, F^*\nu_1, \ldots, F^*\nu_{n-k}$ are defined on $X\setminus B$, so we obtain completely integrable systems on these varieties in the sense of \cite[Definition 2.1]{HaKa}. But the fibers over the image of $D \setminus B$ are collapsed tori.

\subsection{The quadrics}\label{sec:Omega_quadric}

In this subsection, we show that the condition \eqref{eqn:condition_holom_vol_form} is satisfied for the pseudotoric structure on the quadric hypersurfaces in $\mathbb{P}^n$ constructed by Tyurin (as in Theorem \ref{thm:pseudotoric_quadric}).

\subsubsection{Even-dimensional quadric}

Here we consider the even-dimensional quadric
$$Q_{2m}=\{x_0x_1+x_2x_3+\cdots+x_{2m}x_{2m+1}=0\}\subset \mathbb{P}^{2m+1}.$$
Recall that the natural action of a maximal compact torus $T^{m+1}\subset T_{\mathbb{C}}^{m+1}$ of $SO(2m+2, \mathbb{C})$ on the quadric was given in \eqref{actionevenqua}.
On the affine plane $x_0\neq 0$, the quadric is defined by  $z_1+z_2z_3+\cdots+z_{2m}z_{2m+1}=0$, where $z_i={x_i\over x_0}$ are the inhomogeneous coordinates.

Let $\theta_k$ denote the weight of $t_k$. Then for $1\leq j\leq m$, the weight of the action of $T^{m+1}$ on the inhomogeneous coordinate $z_{2j}$ (resp. $z_{2j+1}$) is given by $\theta_j-\theta_0$ (resp. $-\theta_j-\theta_0$).
Denote
$$\psi_{m+1}=-\theta_m-\theta_0;\quad \psi_j=\theta_j-\theta_0,\quad j=1, \dots, m.$$
Then the weight on $z_{2j+1}$ is given by $\psi_m+\psi_{m+1}-\psi_j$ for $1\leq j\leq m$.

Let $\{V_1,\dots, V_{m+1}\}$ denote the basis of Hamiltonian vector fields generated by the torus action of $T^{m+1}$ and corresponding to the weights $\psi_j$. Then we have
\begin{equation*}
\left\{
\begin{split}
\iota_{V_j} & = z_{2j}\iota_{z_{2j}}-z_{2j+1}\iota_{z_{2j+1}}, \quad j = 1, \dots, m-1;\\
\iota_{V_m} & = z_{2m}\iota_{z_{2m}}+\sum_{i=1}^{m-1} z_{2i+1}\iota_{z_{2i+1}};\\
\iota_{V_{m+1}} & = z_{2m+1}\iota_{z_{2m+1}} +\sum_{i=1}^{m-1} z_{2i+1}\iota_{z_{2i+1}}.
\end{split}
\right.
\end{equation*}

Similar to the case of two-step flag varieties, we take an anti-canonical divisor $D^{\rm Sch}$ of $X=Q_{2m}$ defined by Schubert divisors, namely, given by
$$D^{\rm Sch} :=\{x_0x_1\cdots x_{2m-3}x_{2m}x_{2m+1}=0\},$$
and consider the meromorphic volume form
$$\Omega_X^{\rm Sch} := {dz_2dz_3\cdots dz_{2m}dz_{2m+1}\over z_1\cdots z_{2m-3}z_{2m} z_{2m+1}}.$$
We denote
$$\Omega := dz_2dz_3\cdots dz_{2m}dz_{2m+1}\text{ and } A_j:=z_{2j}z_{2j+1}\text{ for }j = 1, \dots, m,$$
so that we can also write
$$\Omega_X^{\rm Sch} = {\Omega \over z_1A_1A_2\cdots A_{m-2}z_{2m}z_{2m+1}}.$$

\begin{lemma}\label{lem:contrusualevn}
For $\Omega = dz_2dz_3\cdots dz_{2m}dz_{2m+1}$, we have
$$ \iota_{V_{m+1}}\iota_{V_m}\iota_{V_1}\cdots \iota_{V_{m-1}} \Omega= \sum_{j=1}^{m}(-1)^{m+j} A_jdA_1\cdots \widehat{dA_j}\cdots dA_m.$$
\end{lemma}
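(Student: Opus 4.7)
My strategy mirrors the proof of Lemma~\ref{lem:contrusual}: compute $\iota_{V_{m+1}}\iota_{V_m}\iota_{V_1}\cdots\iota_{V_{m-1}}\Omega$ in three stages, first collapsing the ``easy'' contractions $V_1,\dots,V_{m-1}$, then handling $V_m$, and finally $V_{m+1}$. Write $\omega_j := dz_{2j}\wedge dz_{2j+1}$, so that $\Omega = \omega_1\wedge\cdots\wedge\omega_m$. For each $j\leq m-1$, the vector field $V_j = z_{2j}\iota_{z_{2j}} - z_{2j+1}\iota_{z_{2j+1}}$ only touches $\omega_j$, and a direct check gives $\iota_{V_j}\omega_j = z_{2j}dz_{2j+1} + z_{2j+1}dz_{2j} = dA_j$. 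Since the $V_j$'s with $j\leq m-1$ have pairwise disjoint coordinate dependence, one obtains
\begin{equation*}
\iota_{V_1}\cdots\iota_{V_{m-1}}\Omega \;=\; dA_1\wedge\cdots\wedge dA_{m-1}\wedge dz_{2m}\wedge dz_{2m+1}.
\end{equation*}

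Next I apply $\iota_{V_m}$ and then $\iota_{V_{m+1}}$ using the antiderivation property. Since both $V_m$ and $V_{m+1}$ contain the common sum $\sum_{i=1}^{m-1}z_{2i+1}\iota_{z_{2i+1}}$, one has $\iota_{V_m}dA_j = \iota_{V_{m+1}}dA_j = A_j$ for all $j\leq m-1$; moreover $\iota_{V_m}dz_{2m} = z_{2m}$, $\iota_{V_m}dz_{2m+1} = 0$, $\iota_{V_{m+1}}dz_{2m} = 0$, and $\iota_{V_{m+1}}dz_{2m+1} = z_{2m+1}$. Applying $\iota_{V_m}$ first yields the sum
\begin{equation*}
\sum_{j=1}^{m-1}(-1)^{j-1}A_j\, dA_1\wedge\cdots\widehat{dA_j}\cdots\wedge dA_{m-1}\wedge dz_{2m}\wedge dz_{2m+1} \;+\; (-1)^{m-1}z_{2m}\, dA_1\wedge\cdots\wedge dA_{m-1}\wedge dz_{2m+1}.
\end{equation*}
A subsequent application of $\iota_{V_{m+1}}$ produces three families of terms: a ``pure $j=m$'' contribution $A_m\, dA_1\wedge\cdots\wedge dA_{m-1}$ coming from contracting $dz_{2m+1}$ in the second summand (using $z_{2m}z_{2m+1}=A_m$); ``mixed'' contributions proportional to $A_j z_{2m+1}$ or $A_j z_{2m}$ for $j\leq m-1$, which, after pairing up via $dA_m = z_{2m+1}dz_{2m} + z_{2m}dz_{2m+1}$, reassemble into the $j\leq m-1$ summands of the target; and ``quadratic'' terms proportional to $A_jA_k$ with distinct $j,k\leq m-1$.

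The main bookkeeping point, and the only step requiring care, is showing that the quadratic terms cancel and that the residual signs produce the expected $(-1)^{m+j}$. Cancellation is an antisymmetry argument: the $(j,k)$ and $(k,j)$ pairings produce the same reduced wedge $dA_1\wedge\cdots\widehat{dA_j}\widehat{dA_k}\cdots\wedge dA_{m-1}\wedge dz_{2m}\wedge dz_{2m+1}$, but their coefficients differ in sign because deleting $dA_{\min(j,k)}$ first (versus $dA_{\max(j,k)}$ first) shifts the position of the remaining deleted index by one. The surviving non-quadratic contributions, combined via $dA_m = z_{2m+1}dz_{2m} + z_{2m}dz_{2m+1}$, assemble into $\sum_{j=1}^{m}(-1)^{m+j}A_j\, dA_1\wedge\cdots\widehat{dA_j}\cdots\wedge dA_m$. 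A sanity check in the base case $m=2$, where both sides evaluate to $-A_1 dA_2 + A_2 dA_1$, confirms the sign conventions.
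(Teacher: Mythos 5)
Your proposal is correct and follows essentially the same route as the paper: both first collapse $\iota_{V_1}\cdots\iota_{V_{m-1}}\Omega$ to $dA_1\wedge\cdots\wedge dA_{m-1}\wedge dz_{2m}\wedge dz_{2m+1}$ and then treat $V_m,V_{m+1}$ together. The only difference is bookkeeping: the paper first simplifies the composite operator $\iota_{V_{m+1}}\iota_{V_m}$ to $A_m\iota_{z_{2m+1}}\iota_{z_{2m}}+\sum_j z_{2j+1}\iota_{z_{2j+1}}(z_{2m}\iota_{z_{2m}}-z_{2m+1}\iota_{z_{2m+1}})$ so the cross terms cancel at the operator level by anticommutativity, whereas you expand the two contractions successively and cancel the quadratic $A_jA_k$ terms by the same antisymmetry at the level of forms.
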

\begin{proof}
For $1\leq j\leq m-1$, we have
$$z_{2j+1}\iota_{z_{2j+1}}dA_j=A_j,\quad \iota_{V_j} dz_{2j}d_{z_{2j+1}}=d(z_{2j}z_{2j+1})=dA_j.$$
Moreover, $\iota_{V_{m+1}}\iota_{V_m}=A_{m}\iota_{z_{2m+1}}\iota_{z_{2m}}+\sum_{j=1}^{m-1}z_{2j+1}\iota_{z_{2j+1}} (z_{2m}\iota_{z_{2m}}-z_{2m+1}\iota_{z_{2m+1}})$. Therefore
\begin{align*}
\iota_{V_{m+1}}\iota_{V_m}\iota_{V_1}\cdots \iota_{V_{m-1}}\Omega
& = \iota_{V_{m+1}}\iota_{V_m} dA_1\cdots dA_{m-1} dz_{2m}dz_{2m+1}\\
& = A_mdA_1\cdots dA_{m-1}+\sum_{j=1}^{m-1}(-1)^{m-1}z_j\iota_{z_{2j+1}} dA_1\cdots dA_{m-1}dA_m\\
& = \sum_{j=1}^{m}(-1)^{m+j} A_jdA_1\cdots \widehat{dA_j}\cdots dA_m.
\end{align*}
\end{proof}

\begin{prop}\label{prop:evenSchu}
For $\Omega_X^{\rm Sch} = {dz_2dz_3\cdots dz_{2m}dz_{2m+1}\over z_1z_2\cdots z_{2m-3}z_{2m}z_{2m+1}}$, we have
$$ \iota_{V_{m+1}}\iota_{V_m}\iota_{V_1}\cdots \iota_{V_{m-1}} \Omega_X^{\rm Sch}= d\log f_1\wedge d\log f_2\cdots \wedge d\log f_{m-2}\wedge d\log f_{m},$$
where $f_j = {A_j\over x_0x_1} = {x_{2j}x_{2j+1}\over x_0x_1}$ for $j\in\{1, \dots, m\}\setminus\{m-1\}$.
\end{prop}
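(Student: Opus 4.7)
The plan parallels the proof of Proposition \ref{prop:anticanSchu}. First, since the scalar factor $1/(z_1 A_1 \cdots A_{m-2} A_m)$ commutes with interior products (contractions annihilate $0$-forms), Lemma \ref{lem:contrusualevn} yields immediately
\begin{equation*}
\iota_{V_{m+1}}\iota_{V_m}\iota_{V_1}\cdots \iota_{V_{m-1}} \Omega_X^{\rm Sch} = \frac{\sum_{j=1}^m (-1)^{m+j} A_j\, \Theta_j}{z_1 A_1 \cdots A_{m-2} A_m},
\end{equation*}
where $\Theta_j := dA_1 \wedge \cdots \wedge \widehat{dA_j} \wedge \cdots \wedge dA_m$. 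It therefore suffices to show that the right-hand side of the proposition coincides with this expression.

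Next, I would expand $\bigwedge_{j \in J} d\log f_j$ with $J := \{1, \ldots, m-2, m\}$ by writing $d\log f_j = (z_1\, dA_j - A_j\, dz_1)/(z_1 A_j)$. Multiplying through by the common denominator $z_1 \prod_{j \in J} A_j$ and using $dz_1 \wedge dz_1 = 0$ (so only terms with at most one factor of $dz_1$ survive) gives
\begin{equation*}
z_1 \prod_{j \in J} A_j \cdot \bigwedge_{j \in J} d\log f_j = z_1 \bigwedge_{j \in J} dA_j + dz_1 \wedge \sum_{k=1}^{m-1} (-1)^k A_{i_k} \bigwedge_{\ell \neq k} dA_{i_\ell},
\end{equation*}
where $i_1 < \cdots < i_{m-1}$ enumerate $J$.

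The crucial input is the quadric relation on the chart $x_0 \neq 0$, namely $z_1 + A_1 + \cdots + A_m = 0$, which yields $dz_1 = -\sum_{r=1}^m dA_r$. When wedged against $\bigwedge_{\ell \neq k} dA_{i_\ell}$, only the summands with $r \in \{m-1, i_k\}$ survive. A sign count re-expresses each resulting $(m-1)$-form as $\pm \Theta_j$ (the case $k \leq m-2$ produces $\Theta_k$ via $(m-2-k)$ transpositions of $dA_{m-1}$, while $k = m-1$ produces $\Theta_m$). Then the collapse $\sum_{j \in J} A_j = -z_1 - A_{m-1}$ (again from the constraint) combines the $k$-sum with the leading $z_1 \bigwedge_{j \in J} dA_j = z_1\Theta_{m-1}$ into precisely $\sum_{j=1}^m (-1)^{m+j} A_j \Theta_j$, matching the LHS.

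The main obstacle is combinatorial bookkeeping: tracking signs when shuffling $dz_1$ and $dA_{m-1}$ through the wedge products, together with the different index conventions $i_k = k$ for $k \leq m-2$ versus $i_{m-1} = m$. The case $m = 3$ is a useful warm-up, where both sides reduce to $A_1 dA_2 dA_3 - A_2 dA_1 dA_3 + A_3 dA_1 dA_2$ once one invokes $z_1 + A_1 + A_3 = -A_2$; this instance already contains the whole mechanism in miniature.
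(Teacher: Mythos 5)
Your proposal is correct and follows essentially the same route as the paper: clear the denominator, expand $\bigwedge_{j\in J} d\log f_j$ so that at most one factor contributes $dz_1$, invoke the quadric relation $z_1+\sum_{r=1}^m A_r=0$ to trade $dz_1$ for $dA_{i_k}+dA_{m-1}$ in each summand, and match the result against Lemma \ref{lem:contrusualevn}. The only difference is cosmetic bookkeeping (you pull $dz_1$ to the front before substituting, whereas the paper substitutes in place), and your sign counts work out to the same answer $\sum_{j=1}^m(-1)^{m+j}A_j\,dA_1\cdots\widehat{dA_j}\cdots dA_m$.
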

\begin{proof}
We notice that $z_1+\sum_{j=1}^mA_j=0$. Similar to the proof of Proposition \ref{prop:anticanSchu}, we calculate the right hand side RHS of the above expression as follows.
\begin{align*}
  & z_1A_1\cdots A_{m-2}A_m\cdot \text{RHS}\\
= & (z_1A_1\cdots A_{m-2}A_m)({dA_1\over A_1}-{d{z_1}\over z_1})\cdots ({dA_{m-2}\over A_{m-2}}-{d{z_1}\over z_1})  ({dA_{m}\over A_m}-{d{z_1}\over z_1})\\
= & {z_1 d{A_1} \cdots dA_{m-2}dA_m + \sum_{1\leq j\leq m\atop j\neq m-1} A_jdA_1d \cdots dA_{j-1}(-dz_1)dA_{j+1}\cdots dA_{m-2}dA_m  }\\
= & {z_1 d{A_1} \cdots dA_{m-2}dA_m + \sum_{1\leq j\leq m\atop j\neq m-1} A_jdA_1d \cdots dA_{j-1}(dA_j+dA_{m-1})dA_{j+1}\cdots dA_{m-2}dA_m  }\\
= & (z_1 +\sum\limits_{1\leq j\leq m\atop j\neq m-1} A_j) d{A_1} \cdots dA_{m-2}dA_m +\sum\limits_{1\leq j\leq m\atop j\neq m-1} (-1)^{m-j}A_jdA_1  \cdots  \widehat{dA_j}\cdots   dA_m  \\
= & z_1A_1\cdots A_{m-2}A_m\cdot \text{LHS}.
\end{align*}
The last equality follows from Lemma \ref{lem:contrusualevn}.
\end{proof}

Following \cite{PRW}, we consider another anti-canonical divisor:
$$D^{\rm Rie} := \{x_0x_1B_1\cdots B_{m-2} x_{2m}x_{2m+1}=0\},$$
where $B_j=\sum_{i=0}^j x_{2i}x_{2i+1}$ for $1\leq j\leq m-2$; more generally, as in the case of the two step-flag variety $F\ell_{1,n-1; n}$, we may consider a sequence of anticanonical divisors of the form
$$D^{\scriptscriptstyle (j)} := \{x_0x_1 A_1\cdots A_{j-1} B_j\cdots B_{m-2}x_{2m}x_{2m+1}=0\},\quad 1\leq j \leq m-1$$
so that $D^{\scriptscriptstyle (m-1)} = D^{\rm Sch}$ and $D^{\scriptscriptstyle (1)} = D^{\rm Rie}$.

Then by similar calculations in the proof of Proposition \ref{prop:antigeneral}, we obtain the following.
\begin{prop}\label{prop:evenPRW}
We have
$$ \iota_{V_{m+1}}\iota_{V_m}\iota_{V_1}\cdots \iota_{V_{m-1}} \Omega_X^{\scriptscriptstyle (j)} = d\log g_1 \cdots \wedge d\log g_{m-2}\wedge d\log g_{m},$$
where
$$ \Omega_X^{\scriptscriptstyle (j)} := {\Omega\over x_0x_1 x_2x_3\cdots x_{2j-2}x_{2j-1}B_j\cdots B_{m-2} x_{2m}x_{2m+1}}$$
and $g_k = {A_k\over x_0x_1}$ for $1\leq k\leq j-1$, $g_k = {B_k\over x_0x_1}$ for $j\leq k\leq m-2$, and $g_m = {x_{2m}x_{2m+1}\over x_0x_1}$.
\end{prop}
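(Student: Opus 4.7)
The plan is to mimic the inductive strategy of the proof of Proposition \ref{prop:antigeneral}: establish the identity by decreasing induction on $j$, with $j = m-1$ (the Schubert case) serving as the base, and the inductive step reducing to showing $D^{\scriptscriptstyle (j)} \cdot \text{RHS}^{\scriptscriptstyle (j)} = D^{\scriptscriptstyle (j+1)} \cdot \text{RHS}^{\scriptscriptstyle (j+1)}$ for $1 \leq j \leq m-2$, where $\text{RHS}^{\scriptscriptstyle (j)}$ denotes the claimed right-hand side at level $j$.

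For the base case $j = m-1$, the product $B_j \cdots B_{m-2}$ is empty, so $D^{\scriptscriptstyle (m-1)} = x_0 x_1 \cdot A_1 \cdots A_{m-2} \cdot x_{2m} x_{2m+1} = D^{\rm Sch}$ and $\Omega_X^{\scriptscriptstyle (m-1)} = \Omega_X^{\rm Sch}$; the $g_k$'s at this level coincide with the $f_k$'s of Proposition \ref{prop:evenSchu}, and the claim is exactly that proposition.

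For the inductive step, notice that $D^{\scriptscriptstyle (j+1)}/D^{\scriptscriptstyle (j)} = A_j/B_j$ (only the $j$-th factor in the denominator changes) and that $\text{RHS}^{\scriptscriptstyle (j)}$ and $\text{RHS}^{\scriptscriptstyle (j+1)}$ agree in every slot except at index $k = j$, where $A_j/(x_0 x_1)$ is replaced by $B_j/(x_0 x_1)$. Letting $R'$ denote the common wedge of the remaining $d\log g_k$'s, the identity $D^{\scriptscriptstyle (j)} \cdot \text{RHS}^{\scriptscriptstyle (j)} = D^{\scriptscriptstyle (j+1)} \cdot \text{RHS}^{\scriptscriptstyle (j+1)}$ reduces to verifying
$$\left( \tfrac{A_j}{B_j}\, d\log \tfrac{A_j}{x_0 x_1} \;-\; d\log \tfrac{B_j}{x_0 x_1} \right) \wedge R' = 0.$$
A short direct manipulation using $B_j - B_{j-1} = A_j$ (with the convention $B_0 := x_0 x_1$) rewrites the parenthesized 1-form as $-(x_0 x_1/B_j) \cdot d\bigl(B_{j-1}/(x_0 x_1)\bigr)$, so the task further reduces to proving $d\bigl(B_{j-1}/(x_0 x_1)\bigr) \wedge R' = 0$.

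This last step is the real content of the argument. Since $B_{j-1}/(x_0 x_1) = 1 + g_1 + g_2 + \cdots + g_{j-1}$ and each $g_k$ for $k < j$ is a slot common to both $\text{RHS}^{\scriptscriptstyle (j)}$ and $\text{RHS}^{\scriptscriptstyle (j+1)}$, the differential $d\bigl(B_{j-1}/(x_0 x_1)\bigr) = \sum_{k=1}^{j-1} g_k \, d\log g_k$ is a linear combination of exactly the logarithmic 1-forms already appearing in $R'$, so wedging with $R'$ annihilates every term. This closes the induction. The main subtlety, compared to the flag-variety case, is that the recursion available here is $B_j = B_{j-1} + A_j$ (with only positive signs, in contrast to the alternating relation $A_j = B_j + B_{j+1}$), which dictates the direction of the induction and makes the identification of the "extra" term $d\bigl(B_{j-1}/(x_0 x_1)\bigr)$ as a polynomial combination of the \emph{preserved} slots $g_1, \ldots, g_{j-1}$ the key structural input; once this alignment is in place, the algebra is completely routine.
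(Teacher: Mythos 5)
Your proof is correct and takes essentially the same route the paper intends: the paper proves this proposition by ``similar calculations in the proof of Proposition \ref{prop:antigeneral}'', i.e.\ a (decreasing) induction on $j$ with the Schubert case (Proposition \ref{prop:evenSchu}) as base and the telescoping step $D^{\scriptscriptstyle (j)}\cdot\mathrm{RHS}^{\scriptscriptstyle (j)} = D^{\scriptscriptstyle (j+1)}\cdot\mathrm{RHS}^{\scriptscriptstyle (j+1)}$. Your verification of that step---isolating the single changed slot via $\tfrac{A_j}{B_j}\,d\log\tfrac{A_j}{x_0x_1}-d\log\tfrac{B_j}{x_0x_1}=-\tfrac{x_0x_1}{B_j}\,d\bigl(\tfrac{B_{j-1}}{x_0x_1}\bigr)$ and noting that $d\bigl(B_{j-1}/(x_0x_1)\bigr)$ is a combination of the unchanged $d\log g_k$ already present in the wedge---is a cleaner repackaging of the term-by-term expansion carried out in Proposition \ref{prop:antigeneral}, and it is valid.
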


By similar reasoning as in the paragraph right before Theorem \ref{thm:slag_two-step_flag}, we can check that the functions $|f_1|,\dots,|f_m|$ as well as $|g_1|,\dots,|g_m|$ are Poisson commuting. So Propositions \ref{prop:evenSchu} and \ref{prop:evenPRW} together with Proposition \ref{prop:sLag_fibration} give the following theorem.

\begin{thm}\label{thm:slag_even-dim_quadric}
For each $D = D^{\scriptscriptstyle (j)}$, the map
$$\rho := (\mu_1, \ldots, \mu_k, F^*\nu_1, \ldots, F^*\nu_{n-k}): X \setminus D \to \mathbb{R}^n$$
defines a special Lagrangian fibration on $X \setminus D$, where the special condition is with respect to the holomorphic volume form $\Omega_X^{\scriptscriptstyle (j)}$ on $X\setminus D$.
\end{thm}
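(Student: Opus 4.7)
The plan is to verify in order the four hypotheses of Proposition \ref{prop:sLag_fibration} for the data $(X, \omega_X) = (Q_{2m}, \omega_{\mathrm{FS}}|_{Q_{2m}})$, $Y = \mathbb{P}^{m-1}$, anticanonical divisor $D = D^{\scriptscriptstyle(j)}$, meromorphic volume form $\Omega_X^{\scriptscriptstyle(j)}$, Hamiltonian vector fields $V_1,\dots,V_{m+1}$ coming from the $T^{m+1}$-action, and functions $f_\ell := g_\ell$ for $\ell\in\{1,\dots,m-2,m\}$ as in Proposition \ref{prop:evenPRW}. The pseudotoric structure itself is already provided by Theorem \ref{thm:pseudotoric_quadric}, so only the conditions (i)--(iv) on $(D, \Omega_X)$ need checking.

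For (i), observe that $D^{\scriptscriptstyle(j)}$ is a union of the hyperplane sections $\{x_0=0\}, \{x_1=0\}, \{x_{2m}=0\}, \{x_{2m+1}=0\}$, the $T^{m+1}$-invariant quadric sections $A_k = x_{2k}x_{2k+1} = 0$ for $1\le k\le j-1$, and the sections $B_k = 0$ for $j\le k\le m-2$; since each monomial $x_{2i}x_{2i+1}$ has weight zero under the $T^{m+1}$-action given in \eqref{actionevenqua}, every $A_k$ and $B_k$ is $T^{m+1}$-invariant, so the whole divisor $D^{\scriptscriptstyle(j)}$ is invariant. For (ii), note that $F$ is given by $[y_0:\dots:y_m] = [x_0x_1:x_2x_3:\dots:x_{2m}x_{2m+1}]$, and the toric anticanonical divisor $E \subset \mathbb{P}^{m-1} = \{y_0+\cdots+y_m=0\}$ is the locus where at least one of the toric-invariant functions $g_\ell$ vanishes or blows up; tracing through the definitions, $F^{-1}(E)$ is contained in $D^{\scriptscriptstyle(j)}$, so $F(X\setminus D^{\scriptscriptstyle(j)})\subset Y\setminus E$.

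Condition (iii) is the content of Proposition \ref{prop:evenPRW}: on the affine chart $x_0\neq 0$ we have the identity
\[
\iota_{V_{m+1}}\iota_{V_m}\iota_{V_1}\cdots \iota_{V_{m-1}}\Omega_X^{\scriptscriptstyle(j)} = d\log g_1\wedge \cdots \wedge d\log g_{m-2}\wedge d\log g_m,
\]
and each $g_\ell$ is the pullback by $F$ of a $T^{m+1}$-invariant rational function on $\mathbb{P}^{m-1}$ (indeed, the $g_\ell$ are ratios of the toric coordinates $y_0,\dots,y_m$ on $\mathbb{P}^{m-1}$ composed with a linear change of coordinates). For (iv), the functions $|g_\ell|$ are the norms of rational functions of the form $y_\ell/y_0$ (after a linear isomorphism $\sigma:\mathbb{P}^{m-1}\to\mathbb{P}^{m-1}$ given by a lower triangular matrix implementing $B_k = A_k + B_{k-1}$); since $\log|y_\ell/y_0|$ for $\ell = 1,\dots, m-1$ are the components of the standard log map on $(\mathbb{C}^*)^{m-1}\subset \mathbb{P}^{m-1}$, their norms Poisson commute with respect to $\omega_{\mathrm{FS}}$. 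The linear isomorphism $\sigma$ preserves $\omega_{\mathrm{FS}}$ and hence the induced Poisson bracket, so the $|g_\ell|$ are also Poisson commuting.

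With all four conditions verified, Proposition \ref{prop:sLag_fibration} directly yields the special Lagrangian fibration claim. The only genuinely non-routine step is (iv) — the verification that the particular rational functions appearing in $d\log$ of Proposition \ref{prop:evenPRW} define Poisson-commuting norms; but as explained, this reduces to recognising $\sigma$ as a linear (hence symplectomorphic) change of homogeneous coordinates. I expect the main subtlety, if any, is in being careful about the different index ranges of $j$ for the two anticanonical divisors $D^{\rm Sch}$ and $D^{\rm Rie}$, and in keeping track of which coordinate on $\mathbb{P}^{m-1}$ is normalised to $1$ in each of the two propositions, but this is purely bookkeeping.
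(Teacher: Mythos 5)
Your proposal is correct and follows essentially the same route as the paper: the paper's proof of this theorem is precisely to invoke Propositions \ref{prop:evenSchu} and \ref{prop:evenPRW} for condition \eqref{eqn:condition_holom_vol_form} and to verify Poisson commutativity of the norms $|g_\ell|$ by the same linear-isomorphism/log-map argument used before Theorem \ref{thm:slag_two-step_flag}, then apply Proposition \ref{prop:sLag_fibration}. Your explicit checks of the $T^{m+1}$-invariance of $D^{\scriptscriptstyle (j)}$ and of $F(X\setminus D^{\scriptscriptstyle (j)})\subset Y\setminus E$ are details the paper leaves implicit, but they do not change the argument.
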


%Observe that the functions $\mu_1, \ldots, \mu_k, F^*\nu_1, \ldots, F^*\nu_{n-k}$ are defined on $X\setminus B$ in both the cases of the quadrics and the two-step flag varieties $F\ell_{1,n-1; n}$, so we obtain completely integrable systems on these varieties in the sense of \cite[Definition 2.1]{HaKa}. But the fibers over the image of $D \setminus B$ are collapsed tori.

\subsubsection{Odd-dimensional quadric}

Here  we  consider the odd-dimensional quadric
$$Q_{2m-1}=\{x_0^2+x_1x_2+\cdots+x_{2m-1}x_{2m}=0\}\subset \mathbb{P}^{2m}.$$
Recall that the  action of a maximal compact torus $T^{m}\subset T^m_{\mathbb{C}}$ of $SO(2m+1, \mathbb{C})$   on the quadric was given in \eqref{actionoddqua}.
On the affine plane $x_{2m}\neq 0$, the quadric is defined by  $z_0^2+z_1z_2+\cdots+z_{2m-3}z_{2m-2}+z_{2m-1}=0$, where $z_i={x_i\over x_{2m}}$ are the inhomogeneous coordinates.

Let $\theta_k$ denote the weight of $t_k$. Then for $1\leq j\leq m$, the weight  of the action of $T^{m+1}$ on the inhomogeneous coordinate  $z_{2j-1}$ (resp. $z_{2j}$) is given by $\theta_j+\theta_m$ (resp. $-\theta_j+\theta_m$) for $j=1,\dots, m-1$, and the weight on $z_0$ is given by $\theta_m$.

Let $\{V_1,\dots, V_{m}\}$ denote a basis of Hamiltonian vector fields generated by the torus action of $T^{m}$ and corresponding to the weights $\theta_j$. Then we have
$$\iota_{V_m}=\sum\nolimits_{i=0}^{m-1}z_i\iota_{z_{2i}} \quad\text{and}\quad \iota_{V_j}=z_{2j-1}\iota_{z_{2j-1}}-z_{2j}\iota_{z_{2j}},\quad j=1, \dots, m-1.$$

Similar to the case of even-dimensional quadric, we denote
$A_j=z_{2j-1}z_{2j}$ (or $A_j=x_{2j-1}x_{2j}$ when we refer to homogeneous coordinates) for $j=1, \dots, m$ by abuse of notation. Let $B_j=\sum_{i=j}^m A_j$ for $j=2,\dots,m$. We consider the following two anti-canonical divisors $D^{\rm Sch}$, $D^{\rm Rie}$ of $X=Q_{2m-1}$ given respectively by
\begin{align*}
D^{\rm Sch} & := \{x_0A_2\cdots A_m=0\},\\
D^{\rm Rie} & := \{x_0 B_2\cdots B_{m-1} x_{2m-1}x_{2m}=0\}.
\end{align*}
Correspondingly we consider the holomorphic volume forms
\begin{align*}
\Omega_X^{\rm Sch} & := {dz_0dz_1\cdots dz_{2m-2}\over z_0A_2\cdots A_{m-1}z_{2m-1}},\\
\Omega_X^{\rm Rie} & := {dz_0dz_1\cdots dz_{2m-2}\over z_0B_2\cdots B_{m-1}z_{2m-1}}.
\end{align*}
More generally, we may consider a sequence of anticanonical divisors of the form
$$D^{\scriptscriptstyle (j)} := \{x_0A_2\cdots A_{j-1}B_j\cdots B_{m-1}x_{2m-1}x_{2m}=0\},\quad 2\leq j \leq m$$
so that $D^{\scriptscriptstyle (m)} = D^{\rm Sch}$ and $D^{\scriptscriptstyle (2)} = D^{\rm Rie}$, and
$$
\Omega_X^{\scriptscriptstyle (j)} := {dz_0dz_1\cdots dz_{2m-2}\over z_0A_2\cdots A_{j-1} B_j \cdots B_{m-1}z_{2m-1}}.
$$

By similar calculations, we obtain the following.
\begin{prop}\label{prop:odd_quadric}
The following equality holds:
$$
\iota_{V_1} \cdots \iota_{V_{m}} \Omega_X^{\scriptscriptstyle (j)} = \pm d\log {x_0^2\over B_m}\wedge d\log {A_2\over B_m}\wedge \cdots d\log {A_{j-1}\over B_m} \wedge d\log {B_j\over B_m} \wedge d\log {B_{m-1}\over B_m},
$$
and in particular we have
\begin{align*}
\iota_{V_1} \cdots \iota_{V_{m}} \Omega_X^{\rm Sch} & = \pm d\log {x_0^2\over A_m}\wedge d\log {A_2\over A_m}\wedge \cdots \wedge d\log {A_{m-1}\over A_m},\\
\iota_{V_1} \cdots \iota_{V_{m}} \Omega_X^{\rm Rie} & = \pm d\log {x_0^2\over B_m}\wedge d\log {B_2\over A_m}\wedge \cdots \wedge d\log {B_{m-1}\over B_m}.
\end{align*}
\end{prop}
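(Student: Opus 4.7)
The plan is to mirror the three-stage strategy already deployed for the two-step flag and even-dimensional quadric cases (Propositions \ref{prop:antigeneral} and \ref{prop:evenPRW}).

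First I would establish a \emph{bare} contraction lemma, analogous to Lemmas \ref{lem:contrusual} and \ref{lem:contrusualevn}, for $\Omega = dz_0 dz_1 \cdots dz_{2m-2}$. Since $\iota_{V_j}(dz_{2j-1}\wedge dz_{2j}) = z_{2j-1}\,dz_{2j} - z_{2j}\,dz_{2j-1} = -dA_j$ up to sign for $j = 1, \ldots, m-1$, successive application of $\iota_{V_1}, \ldots, \iota_{V_{m-1}}$ collapses the $z_1, \ldots, z_{2m-2}$ differentials and yields
\[
\iota_{V_1}\cdots\iota_{V_{m-1}}\Omega = \pm\, dz_0 \wedge dA_1\wedge\cdots\wedge dA_{m-1}.
\]
Applying the final $\iota_{V_m}$, for which $\iota_{V_m} dz_0 = z_0$ and $\iota_{V_m} dA_j = 2A_j$ by homogeneity (each $A_j$ has weight $2$ under $\theta_m$), produces
\[
\iota_{V_1}\cdots\iota_{V_m}\Omega = \pm\Bigl(z_0\, dA_1\cdots dA_{m-1} + \sum_{j=1}^{m-1}(-1)^{j}\, 2A_j\, dz_0\, dA_1\cdots \widehat{dA_j}\cdots dA_{m-1}\Bigr).
\]
The quadric relation $z_0^2 + A_1 + A_2 + \cdots + A_{m-1} + z_{2m-1} = 0$, equivalently $2z_0\,dz_0 = -\sum_{j=1}^{m-1}dA_j - dA_m$ (with $A_m = z_{2m-1}$ in this chart), will be used to rewrite this compactly in terms of $d(z_0^2)$ and $dA_j$'s.

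Next, I would verify the Schubert case $j=m$ directly. Expanding
\[
d\log\tfrac{x_0^2}{A_m}\wedge d\log\tfrac{A_2}{A_m}\wedge\cdots\wedge d\log\tfrac{A_{m-1}}{A_m}
= \Bigl(\tfrac{2\,dz_0}{z_0}-\tfrac{dA_m}{A_m}\Bigr)\wedge\bigwedge_{j=2}^{m-1}\Bigl(\tfrac{dA_j}{A_j}-\tfrac{dA_m}{A_m}\Bigr)
\]
and clearing the denominator $z_0 A_2\cdots A_{m-1}A_m$, one obtains a combination of $dz_0\,dA_{i_1}\cdots dA_{i_{m-2}}$ terms and a single $d(z_0^2)\,dA_2\cdots dA_{m-1}$ term. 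Using $2z_0\,dz_0 = -\sum_j dA_j$ to eliminate the ``missing'' $A_1$ variable, this should match the bare contraction formula from step (a) on the nose, up to a global sign. The factor of $2$ from $d\log(x_0^2)$ pairs precisely with the factor of $2$ in the quadric relation; this bookkeeping is the only non-routine point of the verification.

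Finally, I would induct on $j$ downward from $m$ to $2$ by proving
\[
D^{(j)}\cdot \mathrm{RHS}^{(j)} = D^{(j+1)}\cdot \mathrm{RHS}^{(j+1)},
\]
using $A_j = B_j - B_{j+1}$. The two sides differ only in the $j$-th factor ($B_j$ vs.\ $A_j$), and expanding the two-term sum coming from $A_j = B_j - B_{j+1}$ rearranges the wedge products exactly as in the induction step of the proof of Proposition \ref{prop:antigeneral}; no new identities are needed. Combined with step (b), this establishes the formula for every $2 \le j \le m$, and specialization to $j=m$ and $j=2$ gives the two displayed identities for $\Omega_X^{\mathrm{Sch}}$ and $\Omega_X^{\mathrm{Rie}}$.

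The main obstacle is purely bookkeeping: the coefficient $2$ coming from $x_0^2$ (rather than a product of two distinct linear coordinates as in the even-dimensional case) must be tracked against the $2z_0\,dz_0$ in the differentiated quadric relation, and the $\pm$ sign depends on parity of $m$ via the reordering of the $m$ contractions. Once the Schubert base case is pinned down correctly, the inductive step is a direct transcription of the argument used for $F\ell_{1,n-1;n}$.
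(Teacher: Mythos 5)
Your proposal matches the paper's (largely implicit) proof: the paper disposes of this proposition with ``by similar calculations,'' meaning precisely the bare-contraction-lemma $+$ Schubert-base-case $+$ downward-induction scheme of Lemma \ref{lem:contrusualevn}, Propositions \ref{prop:evenSchu} and \ref{prop:antigeneral} that you spell out, including the use of the differentiated quadric relation and of $A_j = B_j - B_{j+1}$. (One trivial slip: the intermediate expression $z_{2j-1}\,dz_{2j}-z_{2j}\,dz_{2j-1}$ should read $z_{2j-1}\,dz_{2j}+z_{2j}\,dz_{2j-1}=dA_j$, since the sign in $V_j$ and the sign from contracting the second slot cancel; your stated conclusion $\pm dA_j$ and everything downstream are unaffected.)
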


Again, arguing as in the paragraph right before Theorem \ref{thm:slag_two-step_flag}, we see that the two sets of functions given by the norms of those functions obtained in Proposition \ref{prop:odd_quadric} are both Poisson commuting. So
Proposition \ref{prop:odd_quadric} together with Proposition \ref{prop:sLag_fibration} give the following theorem.

\begin{thm}\label{thm:slag_odd-dim_quadric}
For each $D = D^{\scriptscriptstyle (j)}$, the map
$$\rho := (\mu_1, \ldots, \mu_k, F^*\nu_1, \ldots, F^*\nu_{n-k}): X \setminus D \to \mathbb{R}^n$$
defines a special Lagrangian fibration on $X \setminus D$, where the special condition is with respect to the holomorphic volume form $\Omega_X^{\scriptscriptstyle (j)}$ on $X\setminus D$.
\end{thm}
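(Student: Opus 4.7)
The plan is to verify the four hypotheses of Proposition \ref{prop:sLag_fibration} for the pseudotoric structure on $X = Q_{2m-1}$ already constructed in Theorem \ref{thm:pseudotoric_quadric}, with anticanonical divisor $D = D^{\scriptscriptstyle (j)}$ and meromorphic top form $\Omega_X^{\scriptscriptstyle (j)}$, and then invoke that proposition. The heart of the argument, the contraction identity \eqref{eqn:condition_holom_vol_form}, is exactly what Proposition \ref{prop:odd_quadric} gives us, so once the remaining bookkeeping hypotheses are checked the conclusion is immediate.

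First I would observe that $D^{\scriptscriptstyle (j)}$ is $T^m$-invariant: each of the factors $x_0$, $A_i = x_{2i-1}x_{2i}$, $B_k = \sum_{i=k}^m A_i$, and $x_{2m-1}x_{2m}$ in its defining equation is invariant under the $T^m$-action \eqref{actionoddqua} (note the weights of $x_{2i-1}$ and $x_{2i}$ are opposite). Next, I would check that the rational map $F: X \dashrightarrow \mathbb{P}^{m-1} \subset \mathbb{P}^m$, with $y_i = x_{2i-1}x_{2i}$, sends $X \setminus D^{\scriptscriptstyle (j)}$ into the open torus orbit $Y \setminus E$: indeed, on $X \setminus D^{\scriptscriptstyle (j)}$ all of $x_0^2, A_2, \ldots, A_{j-1}, B_j, \ldots, B_{m-1}, x_{2m-1}x_{2m}$ are nonzero, so in particular all coordinates $y_0, \ldots, y_m$ (after an obvious change of basis) are nonvanishing, meaning the image lies in the complement of the toric boundary.

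The contraction condition \eqref{eqn:condition_holom_vol_form} is precisely the content of Proposition \ref{prop:odd_quadric}: the functions $x_0^2/B_m$, $A_i/B_m$ and $B_k/B_m$ appearing there are $T^m$-invariant rational functions, and each of them is manifestly of the form $F^*f_i$ for some rational function $f_i$ on $\mathbb{P}^{m-1}$, since the numerators and denominators are coordinates (or their sums) on the target $\mathbb{P}^m$ restricted to the hyperplane $\mathbb{P}^{m-1}$. Finally, I would verify the Poisson commuting condition (iv): arguing exactly as in the paragraph preceding Theorem \ref{thm:slag_two-step_flag}, the norms of inhomogeneous coordinates on $(\mathbb{C}^*)^{m-1} \subset \mathbb{P}^{m-1}$ Poisson commute with respect to $\omega_{\mathrm{FS}}$ because they together form the Lagrangian torus fibration given by the standard $\mathrm{Log}$ map; the functions $f_i$ we obtained differ from inhomogeneous coordinates only by linear automorphisms of $\mathbb{P}^{m-1}$ (given by lower-triangular matrices identifying $B_m$ with a coordinate and each $B_k$ with a sum of two coordinates), which preserve the Fubini--Study form and hence the induced Poisson bracket.

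With all four hypotheses of Proposition \ref{prop:sLag_fibration} in place, we conclude that $\rho$ is a special Lagrangian torus fibration on $X \setminus D^{\scriptscriptstyle (j)}$ with respect to $\Omega_X^{\scriptscriptstyle (j)}$. I do not anticipate a substantial obstacle here, since the computational heart of the argument has already been done in Proposition \ref{prop:odd_quadric}; the only mildly delicate point is to make the identification of the expressions in Proposition \ref{prop:odd_quadric} as genuine pullbacks $F^*f_i$ of rational functions on the correct toric target $\mathbb{P}^{m-1}$, and to verify that the change-of-basis linear map on $\mathbb{P}^m$ used to pass from the $y_i$'s to the $B_k$'s restricts to a Fubini--Study-preserving automorphism of the hyperplane $\mathbb{P}^{m-1}$.
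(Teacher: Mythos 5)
Your proposal follows exactly the paper's own route: the contraction identity is supplied by Proposition \ref{prop:odd_quadric}, the Poisson-commuting condition is checked by the same $\mathrm{Log}$-map-plus-linear-change-of-coordinates argument used in the paragraph preceding Theorem \ref{thm:slag_two-step_flag}, and the conclusion is then read off from Proposition \ref{prop:sLag_fibration}. The only difference is that you spell out the verification of conditions (i) and (ii) of that proposition, which the paper leaves implicit; the substance is identical.
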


\begin{example} For $m=2$, we have  $\Omega_X^{\rm Sch}=\Omega_X={dz_0dz_1dz_{2}\over z_0z_3}$. Moreover,
\begin{align*}
\iota_{V_2}\iota_{V_1}\Omega_X^{\rm Sch} & = (z_0\iota_{z_0}+z_1\iota_{z_1}+z_{2}\iota_{z_2})(z_1\iota_{z_1}-z_2\iota_{z_2}) {dz_0dz_1dz_{2}\over z_0z_3}\\
& = (z_0\iota_{z_0}+z_1\iota_{z_1}+z_{2}\iota_{z_2}){dz_1z_2dz_0\over z_0z_3}\\
& = {-z_0dz_1z_2+2z_1z_2dz_0\over z_0z_3}\\
& = {z_0dz_3+z_0dz_0^2+2z_1z_2dz_0\over z_0z_3}\\
& = {z_0dz_3-2z_3dz_0\over z_0z_3}\\
%& = {dz_3\over z_0}-{dz_0^2\over z_0^2}\\
& = d\log {z_3\over z_0^2}= - d\log {x_0^2\over x_3x_4}.
 \end{align*}
\end{example}

%We remark that, as in the case of two-step flag varieties, we can consider the intermediate anticanonical divisors $D^{(j)}$:
%\begin{itemize}
%\item
%for $Q_{2m}$, we can consider
%$$D^{(j)} := \{x_0 x_1 A_1 \cdots A_{j-1} B_j \cdots B_{m-2} x_{2m} x_{2m+1}\},\quad 1\leq j \leq m-1,$$
%so that $D^{(m-1)} = D^{\rm Sch}$ and $D^{(1)} = D^{\rm Rie}$.
%\item
%for $Q_{2m-1}$, we can consider
%$$D^{(j)} := \{x_0 A_2 \cdots A_{j-1} B_j \cdots B_{m-1} x_{2m-1} x_{2m}\},\quad 2\leq j \leq m,$$
%so that $D^{(m)} = D^{\rm Sch}$ and $D^{(2)} = D^{\rm Rie}$.
%\end{itemize}
%Using the corresponding holomorphic volume forms, one can obtain similar results. We omit the details

 %**********Relate the fibrations to toric degenerations**********

\section{Towards SYZ mirror symmetry for flag varieties}\label{sec:speculations}

%**********Analyze the wall/chamber structures and discrminant loci of the fibrations**********

In this section, we speculate on the possible relation between the special Lagrangian fibrations we constructed and Rietsch's LG mirror \cite{Riet} from the perspective of SYZ mirror symmetry \cite{SYZ}.

Let $(X, \omega_X)$ be equipped with a pseudotoric structure $F: X\setminus B \to Y$. Let $D \in |-K_X|$ be an anticanonical divisor satisfying
all the conditions in Proposition \ref{prop:sLag_fibration}, so that
$$\rho = (\mu_1, \ldots, \mu_k, F^*\nu_1, \ldots, F^*\nu_{n-k}): X \setminus D \to \mathbb{R}^n$$
defines a special Lagrangian torus fibration .
We also assume that the map
$$\nu = (\nu_1, \ldots, \nu_{n-k}): Y_0 \to \mathbb{R}^{n-k}$$
is regular, so that all fibers are smooth tori $T$ of real dimension $n-k$ (e.g. when $\nu: Y_0 \to \mathbb{R}^{n-k}$ is given, up to a diffeomorphism of the base, by the toric moment map on $Y$). Then a fiber of $\rho$ is singular if and only if it contains a point where the $T^k$-action is not free. Therefore the discriminant loci of the fibration $\rho: X\setminus D \to \mathbb{R}^n$ is given by the image of the non-free loci of the $T^k$-action on $X$.

In the case of the two-step flag variety $X = F\ell_{1,n-1; n}$, written as a degree $(1,1)$ hypersurface
$$x_{2}x_{\hat 2} = x_1x_{\hat 1} + \sum_{j=3}^n(-1)^{j-1}x_jx_{\hat j}$$
in $\mathbb{P}^{n-1} \times \mathbb{P}^{n-1}$, the non-free loci of the $T^{n-1}$-action is given by the union of the codimension 2 subvarieties:
$$\{ x_j = x_{\hat{j}} = 0 \}.$$
Their images in the base of the fibration $\rho: X \setminus B \to \mathbb{R}^{2n-3}$ give the discriminant loci, which is of real codimension 1. The discriminant loci of $\rho $in the case of the quadrics can be similarly described.

\begin{prop}\label{prop:wall}
%\textcolor{red}{\textit{Perhaps we write this proposition separately for the quadrics and the two-step flag variety.}}
For both the quadric hypersurface $Q$ and the two-step flag variety $F\ell_{1,n-1; n}$, a smooth fiber of the special Lagrangian fibration $\rho: X\setminus B \to \mathbb{R}^n$ bounds a nontrivial holomorphic disk if and only if its image under $F$ intersects with the image of the non-free loci of the $T^k$-action.
\end{prop}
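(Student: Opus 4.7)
The plan is to combine a projection/Steinness argument with an explicit vanishing-cycle construction, after first pinning down the local structure of $F$ from the coordinate descriptions of Sections 3.1 and 3.2. Namely, on the complement $X\setminus(B\cup\{x_j=x_{\hat j}=0\})$ (and its analogue for the quadrics), the map $F$ is a locally trivial holomorphic $(\mathbb{C}^*)^k$-bundle over $Y_0\setminus F(\text{non-free loci})$, with free $T^k$-action on each fibre. This follows because each such fibre $F^{-1}(y)$ is cut out by fixing the monomials $x_jx_{\hat j}$ (in the two-step flag case) or the defining monomials of $F$ (in the quadric cases) to nonzero constants, hence is biholomorphic to $(\mathbb{C}^*)^k$.

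For the forward direction, assume $F(L)=T_b$ misses $F(\text{non-free loci})$ and let $u:(D^2,\partial D^2)\to(X,L)$ be a holomorphic disk. Generically $u$ avoids the codimension-$\geq 2$ base locus $B$, so $F\circ u:D^2\to Y$ is holomorphic with boundary on $T_b\subset Y_0\cong(\mathbb{C}^*)^{n-k}$. The $\log$-coordinate argument (a holomorphic function of constant modulus on $\partial D^2$ is constant) shows that any disk lying entirely in $(Y_0,T_b)$ is constant; I would also argue that a non-constant $F\circ u$ wrapping a toric divisor of $Y$ admits no lift to $X$ with boundary on $L$, by invoking the constraints imposed by the $T^k$-bundle structure together with the Lagrangian boundary condition. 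Thus $F\circ u\equiv y$ for some $y\in T_b$, so $u(D^2)\subset F^{-1}(y)\cong(\mathbb{C}^*)^k$, and one more application of the $\log$-argument (now in the fibre coordinates) forces $u$ to be constant.

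For the converse, pick $y_*\in T_b\cap F(\text{non-free loci})$; then the fibre $F^{-1}(y_*)$ acquires a nodal/$A_1$-type singularity from the vanishing of some monomial $x_jx_{\hat j}$ (or its quadric analogue). In Morse-type local coordinates in which the degeneration reads $x_jx_{\hat j}=\varepsilon$ with $\varepsilon\to 0$, the vanishing cycle is an $S^1$ sitting inside the $T^k$-orbit $L\cap F^{-1}(y)$ for $y$ near $y_*$, and the associated Lefschetz thimble is a non-constant holomorphic disk with boundary on $L$. The main obstacle will be the forward-direction claim that non-constant horizontal disks cannot lift: such disks interact with $F^{-1}(E)\subset D$, and excluding them requires a case-by-case verification (for $F\ell_{1,n-1;n}$ and for the two parity classes of quadric) that the $T^k$-bundle monodromy together with the Lagrangian boundary condition on $L$ preclude their existence for generic $L$. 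Identifying the vanishing cycle with the correct circle in $L$ in Step 3 is a secondary but essential difficulty, also handled case by case.
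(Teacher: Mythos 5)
Your overall strategy for the ``only if'' direction (project by $F$, kill the projected disk by the maximum principle applied to $\log|f_j|$, then repeat the argument inside the fibre $F^{-1}(y)\cong(\mathbb{C}^*)^k$) is exactly the paper's argument. But you have a genuine gap, and it comes from where you let the disk live. You take $u:(D^2,\partial D^2)\to(X,L)$ and are then forced to confront disks whose projection $F\circ u$ is non-constant and meets the toric boundary $E\subset Y$; you defer their exclusion to a ``case-by-case verification'' using the $T^k$-bundle monodromy. That step cannot be completed, because such disks \emph{do} exist: every smooth torus fibre $L$ (including those whose image under $F$ misses the non-free locus) bounds non-constant Maslov index two holomorphic disks in $X$ meeting $D=F^{-1}(E)\cup\cdots$ --- these are precisely the disks contributing the terms of the superpotential, and their projections to $Y$ are non-constant disks hitting $E$. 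Read as a statement about disks in $X$, the proposition is therefore false, and the intended (and the paper's) reading is that the disks are maps into $X\setminus D$. With that reading your problematic case evaporates: $F$ sends $X\setminus D$ into $Y\setminus E=Y_0\cong(\mathbb{C}^*)^{n-k}$, so $F\circ u$ lands in $(\mathbb{C}^*)^{n-k}$ and the log-modulus/maximum principle argument applies with no residual cases, which is exactly how the paper's proof runs. (Your appeal to ``generically $u$ avoids $B$'' is also unnecessary once the target is $X\setminus D$.)

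For the converse, the paper is actually silent (it asserts the equivalence after proving only the forward implication), so your attempt to produce an explicit disk is a welcome addition --- but as written it is muddled. A Lefschetz thimble over a path from $y$ to $y_*$ is in general only totally real, not holomorphic, and its boundary lies over $y$, not over $y_*\in F(L)$. The honest construction is simpler: if $y_*\in F(L)\cap F(\{x_j=x_{\hat j}=0\})$, then inside the \emph{singular} fibre $F^{-1}(y_*)$ the locus $\{x_j=0\}$ is a copy of $\mathbb{C}$ in the coordinate $x_{\hat j}$ (times a torus in the remaining coordinates), and the disk $\{|x_{\hat j}|\le r\}$ with the other coordinates frozen at values realised on $L$ is a non-constant holomorphic disk in $X\setminus D$ with boundary a $T^k$-orbit circle inside $L\cap F^{-1}(y_*)$. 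If you repair the forward direction as above and replace the thimble by this explicit disk, your proof is complete and in fact slightly more detailed than the paper's.
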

\begin{proof}
Let $\varphi: D^2 \to X\setminus D$ be a holomorphic map from the unit disk $D^2 \subset \mathbb{C}$. Recall that the map $F$ maps $X \setminus D$ into $Y \setminus E = Y_0$. So the composition $F \circ \varphi$ gives a map from $D^2$ to $Y_0 \cong (\mathbb{C}^*)^{n-k}$. The maximum principle then implies that $F \circ \varphi$ is a constant map, meaning that the image of $\varphi$ lies in a fiber of $F$. By further composing $\varphi$ with a coordinate function on $X$, we see that one of the coordinates must be zero since otherwise the maximum principle will again be violated. So a smooth fiber of $\rho$ will bound a nontrivial holomorphic disk if and only if it intersects with the union of zero loci of the coordinates on $X$, which is exactly the non-free loci of the $T^k$-action.
\end{proof}

%**********Relate the fibrations to the mirror LG models by looking at the number of terms appearing in the superpotentials, and the wall/chamber structures and discrminant loci of the fibrations**********

\subsection{The two-step flag variety $F\ell_{1,n-1; n}$}

Let us first look at the case of the two-step flag variety $X = F\ell_{1,n-1; n}$, and describe in more details the wall/chamber structures in the bases of the special Lagrangian torus fibrations.

First of all, although the fibration is not defined on the whole $X$, we can still take the closure of the base of the fibration in $\mathbb{R}^{2n-3}$ to get a convex body. As in the case of $\mathbb{P}^2$ \cite{Aur1} or the Hirzebruch surfaces $\mathbb{F}_2$ and $\mathbb{F}_3$ \cite{Aur2}, this convex body can be obtained by ``pushing in'' some of the facets in the moment polytope (Gelfand-Cetlin polytope) of the central fiber in the toric degeneration of $X$.

We first note that there are exactly 4 facets (codimension 1 faces) in the moment polytope whose preimages in the Gelfand-Cetlin fibration are {\em not} algebraic subvarieties.\footnote{In contrast, if our fibration can be extended to the closure of the base, then the preimage of the boundary should be precisely the anticanonical divisor $D$ we chose.}
All other facets correspond to subvarieties, and they come in pairs -- each pair is the image of a pair of irreducible Schubert divisors of the form
$$x_{j}x_{\hat{j}} = 0, \quad j = 3, \ldots, n.$$
%By Proposition \ref{prop:wall}, the images of these divisors in the base of the fibration $\rho: X \setminus B \to \mathbb{R}^{2n-3}$ give precisely the walls, i.e. loci of smooth Lagrangian torus fibers which bound nontrivial holomorphic disks in $X \setminus D$.
Note that correspondingly there are $2n$ terms in Givental's mirror superpotential.

Going from the Gelfand-Cetlin fibration to our fibration with $D = D^{\scriptscriptstyle (n)}$, a conifold singularity is being smoothed out, so the 4 facets will become a union of two sets each consisting of 2 facets, which are the images of the 2 sets:
$$\{x_1x_{\hat{1}} = 0\}, \quad \{x_2x_{\hat{2}} = 0\},$$
and one of them, namely the one corresponding to $\{x_2x_{\hat{2}} = 0\}$, will be ``pushed in'' to form a wall in the base of the fibration (see e.g. the picture in \cite{CPU}). Note that the non-free loci inside $X \setminus D$ is easy to describe in this case, namely, simply given by the codimension 2 subvariety
$$x_{2}x_{\hat{2}} = 0.$$
By Proposition \ref{prop:wall}, Lagrangian torus fibers which bound nontrivial holomorphic disks in $X \setminus D$ are those which intersect nontrivially with the divisors
$$\{x_2 = 0\}, \quad \{x_{\hat{2}} = 0\},$$
whose images under $\rho$ produce a single wall in the base.
In terms of the mirror superpotentials, this means 4 of the terms in Givental's mirror now combine to 2 terms in the new mirror (more precisely, this means the mirror superpotential over one of the chambers in the base).
So the superpotential mirror to $(X, D^{\text{Sch}})$ should have
$$ 2 + (2n - 4) = 2n - 2 $$
terms.

Then, for each $j > 3$, going from our fibration with $D = D^{\scriptscriptstyle (j)}$ to that with $D = D^{\scriptscriptstyle (j-1)}$, we are pushing in the pair of facets corresponding to
$$x_{j-1}x_{\widehat{j-1}} = 0,$$
and the facets become a wall inside the interior of the convex body. In terms of the mirror superpotentials, this means that in each step, 2 terms in the previous mirror superpotential combine into one single term in the new mirror superpotential.

So at the end of this whole process (i.e. when $j = 3$), we arrive at the anticanonical divisor $D^{\text{Rie}}$ chosen by Rietsch.
There are $n - 3$ walls in the base of the fibration, which are images of the following $n - 2$ pairs of Schubert divisors:
$$\{x_2x_{\hat{2}} = 0\},\quad \{x_3x_{\hat{3}} = 0\},\quad \ldots,\quad \{x_{n-1}x_{\widehat{{n-1}}} = 0\}.$$
The number of facets of the convex body is now given by
$$ 2 + (n - 3) + 2 = n + 1, $$
which is exactly the number of terms in Rietsch's LG mirror.

\quad\\
\begin{center}
\begin{tabular}{ | l | l | l | }
\hline
Givental & Schubert ($D^{\rm Sch} = D^{\scriptscriptstyle (n)}$) & Rietsch ($D^{\rm Rie} = D^{\scriptscriptstyle (3)}$) \\
\hline
$2n = 4 + 2(n-3) + 2$ & $2n - 2 = 2 + 2(n-3) + 2$ & $n + 1 = 2 + (n-3) + 2$ \\
\hline
\end{tabular}
\end{center}
\quad\\

\begin{remark}
  By Givental's mirror superpotential for $F\ell_{1, n-1;n}$, we mean the one given by Batyrev,  Ciocan-Fontanine,   Kim and   Straten \cite{BCFKS} for a general partial flag manifold  $F\ell_{n_1, \dots, n_k; n}$ which generalizes that of a complete flag manifold $F\ell_{1, 2,\dots, n-1;n}$ constructed by Givental \cite{Givental}. The same superpotential was recovered by Nishinou,  Nohara and  Ueda  \cite{NNU} by a different approach using the Gelfand-Cetlin toric degeneration and counting of holomorphic disks.

  For complex Grassmannians, Marsh and Rietsch constructed a mirror superpotential in terms of the Pl\"ucker coordinates in \cite{MaRi}, which is isomorphic to the Lie-theoretic mirror superpotential for a general homogeneous variety $G/P$ constructed earlier by Rietsch \cite{Riet}.
  There should be a construction of the mirror superpotential for $F\ell_{1, n-1; n}$ in terms of the Pl\"ucker coordinates and similar to that in \cite{MaRi}, which we refer to as Rietsch's mirror in the above table. However, the precise expression of such a superpotential, though should be known to experts, is still missing in the literature.

  The superpotential with respect to the choice $D^{\rm Sch}$ is expected to exist, but again its precise expression is unknown.
\end{remark}

\begin{remark}\label{rmkRietmirror}
  For $n=3, 4$, we can  interpret Rietsch's  mirror   \cite{Riet}  for $F\ell_{1,n-1; n}$ directly by the definition therein in terms of the Pl\"ucker coordinates as follows.  \begin{align*}
   F\ell_{1,2; 3}: & \quad D^{\rm Rie}=\{x_1x_{23}x_3x_{12}=0\},\qquad W^{\rm Rie}= {x_2\over x_1}+{x_{13}\over x_{12}}+q_1{x_{13}\over x_{23}}+q_{2}{x_2\over x_3};\\
      F\ell_{1,3; 4}: &\quad D^{\rm Rie}=\{x_1x_{234}(x_3x_{124}-x_4x_{123})x_4x_{123}=0\},\\
         &\quad W^{\rm Rie}= {x_3\over x_1}+ {x_{134}\over x_{123}}+q_1{x_{134}\over x_{234}}+ q_2{x_3\over x_4}+   {x_2x_{124}\over x_3x_{124}-x_{4}x_{123}}.
  \end{align*}
\end{remark}
\subsection{The quadrics}

For an $N$-dimensional quadric $Q_N \subset \mathbb{P}^{N+1}$, the Gelfand-Cetlin polytope has $N + 2$ facets (see e.g. \cite[Section 3]{NNU}), so the number of terms in Givental's mirror superpotential is $N + 2$. The closure of the base of our fibration is again a convex body obtained by ``pushing in'' some of the facets in the Gelfand-Cetlin polytope  of the central fiber in a toric degeneration of $Q_N$.

Like the two-step flag variety $F\ell_{1,n-1; n}$, there are exactly 4 facets (codimension 1 faces) in the Gelfand-Cetlin polytope whose preimages in the Gelfand-Cetlin fibration are {\em not} algebraic subvarieties.
Going from the Gelfand-Cetlin fibration to our fibration for $D^{\rm Sch}$, a conifold singularity is being smoothed out and these 4 facets become a union of two sets
in which one of them is ``pushed in'' to form a wall in the base of the fibration.

All other facets correspond to subvarieties, and $2m - 4$ of them comprise $m - 2$ pairs (where $m = N/2$ when $N$ is even and $m = (N + 1)/2$ when $N$ is odd). When we go from our fibration with $D = D^{\scriptscriptstyle (j)}$ to that with $D = D^{\scriptscriptstyle (j-1)}$, successive pairs of facets are being pushed in to form walls in the interior and accordingly pairs of terms are being combined into single terms in the new mirror superpotential. At the end, we arrive at the anticanonical divisor $D^{\rm Rie}$ which corresponds to Rietsch's construction
and the number of facets of the convex body is given by
$$\left\{
\begin{array}{ll}
m + 2 & \text{when $N = 2m$ is even};\\
m + 1 & \text{when $N = 2m - 1$ is odd},
\end{array}
\right.$$
which is exactly the number of terms in Rietsch's LG mirror.

\quad\\
For $N = 2m$:
\begin{center}
\begin{tabular}{ | l | l | l | }
\hline
Givental & Schubert ($D^{\rm Sch} = D^{\scriptscriptstyle (m-1)}$) & Rietsch ($D^{\rm Rie} = D^{\scriptscriptstyle (1)}$) \\
\hline
$2m + 2 = 4 + 2(m-2) + 2$ & $2m = 2 + 2(m-2) + 2$ & $m + 2 = 2 + (m-2) + 2$ \\
\hline
\end{tabular}
\end{center}
%\quad\\

\quad\\
For $N = 2m - 1$:
\begin{center}
\begin{tabular}{ | l | l | l | }
\hline
Givental & Schubert ($D^{\rm Sch} = D^{\scriptscriptstyle (m)}$) & Rietsch ($D^{\rm Rie} = D^{\scriptscriptstyle (2)}$) \\
\hline
$2m + 1 = 4 + 2(m-2) + 1$ & $2m - 1 = 2 + 2(m-2) + 1$ & $m + 1 = 2 + (m-2) + 1$ \\
\hline
\end{tabular}
\end{center}
\quad\\

\begin{remark}\label{rem:Giv_Rie}
	By Givental's mirror of a quadric here, we mean the LG superpotential obtained from the toric degeneration described in \cite[Section 3]{NNU}, which is analogous to the construction of Givental's mirror for flag manifolds of type A \cite{Givental, BCFKS, NNU00}.
	
	On the other hand, by Rietsch's mirror, we mean the LG superpotential given in \cite{PeRi, PRW}, which is isomorphic to the original Lie-theoretic construction of Rietsch \cite{Riet}.
	
	The mirror superpotential with respect to  $D^{\rm Sch}$ is expected to exist, but its precise expression is unknown.
\end{remark}

\end{document}